\theoremstyle{plain}
\newtheorem{thm}{Theorem}[section]
\newtheorem{lem}[thm]{Lemma}
\newtheorem{cor}[thm]{Corollary}
\theoremstyle{definition}
\newtheorem{defin}[thm]{Definition}
\newtheorem{ex}[thm]{Example}
\theoremstyle{remark}
\newcommand{\Mesa}{\text{Mesa}}
\newcommand\AMS{\mathsf{AMS}}
\title{Mesas of Stirling Permutations}
\author{Nicolle Gonz\'{a}lez}
\address[N.~Gonz\'{a}lez]{Department of Mathematics, University of California, Berkeley, CA,  94720}
\email{\textcolor{blue}{\href{mailto:nicolle@math.berkeley.edu}{nicolle@math.berkeley.edu}}}
\thanks{}
\author{Pamela E. Harris}
\address[P.~E.~Harris]{Department of Mathematical Sciences, University of Wisconsin, Milwaukee, WI 53211}
\email{\textcolor{blue}{\href{mailto:peharris@uwm.edu}{peharris@uwm.edu}}}
\thanks{P.E.H. was partially supported through a Karen Uhlenbeck EDGE Fellowship.}
\author{Gordon Rojas Kirby}
\address[G.~Rojas Kirby]{Department of Mathematics and Statistics, San Diego State University, San Diego, CA 92182}
\email{\textcolor{blue}{\href{mailto:gkirby@sdsu.edu}{gkirby@sdsu.edu}}}
\thanks{}
\author{Mariana Smit Vega Garcia}
\address[M.~Smit Vega Garcia]{Department of Mathematics, Western Washington University, Bellingham, WA 98225}
\email{\textcolor{blue}{\href{mailto:}{smitvem@wwu.edu}}}
\thanks{M.S.V.G was partially supported by the NSF grant DMS-2054282.}
\author{Bridget Eileen Tenner}
\address[B.~E.~Tenner]{Department of Mathematical Sciences, DePaul University, Chicago, IL 60614}
\email{\textcolor{blue}{\href{mailto:bridget@math.depaul.edu}{bridget@math.depaul.edu}}}
\thanks{B.E.T. was partially supported by the NSF grant DMS-2054436.}
\begin{document}

\maketitle
\begin{abstract}
Given a Stirling permutation $w$, we introduce the mesa set of $w$ as the natural generalization of the 
pinnacle set of a permutation. Our main results characterize admissible mesa sets 
and give closed enumerative formulas in terms of rational Catalan numbers by providing an explicit bijection between mesa sets and rational Dyck paths. 

\end{abstract}

\section{Introduction}\label{sec:intro}

A \emph{Stirling permutation of order $n$} is a word $w = w(1)w(2)\cdots w(2n)$ consisting of the multiset of letters $\{1,1,\ldots, n,n\}$, with the property that for each $1\leq k \leq n$, all of the values appearing between the two copies of $k$ in $w$ are larger than $k$. 
We write $Q_n$ for the set of Stirling permutations of order $n$. For instance, $884425536776321199 \in Q_9$, illustrated in Figure~\ref{fig:newfig}, while $31324421 \not\in Q_4$
 because the value $1$ appears between the two copies of $3$. 
Stirling permutations were introduced by Gessel and Stanley in their study of Stirling numbers of the first and second kind \cite{GesselStanley}. Descent statistics and connections to rooted trees for these permutations and their generalizations have been well studied in recent years \cite{Bona,Damir, Elizalde,Janson, JansonBona, KubaVarvak}.

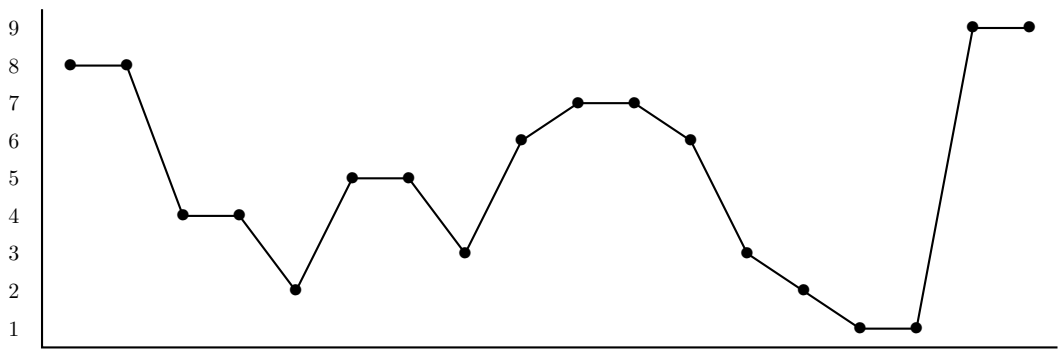
\begin{figure}[htbp]
\begin{tikzpicture}[yscale=.5, xscale=.75]
\draw[thick] (0.5,9.5)--(0.5,0.5)--(18.5,0.5);
\node at (0,1)[scale=.75]{$1$};
\node at (0,2)[scale=.75]{$2$};
\node at (0,3)[scale=.75]{$3$};
\node at (0,4)[scale=.75]{$4$};
\node at (0,5)[scale=.75]{$5$};
\node at (0,6)[scale=.75]{$6$};
\node at (0,7)[scale=.75]{$7$};
\node at (0,8)[scale=.75]{$8$};
\node at (0,9)[scale=.75]{$9$};
\draw[thick] (1,8)--(2,8)--(3,4)--(4,4)--(5,2)--(6,5)--(7,5)--(8,3)--(9,6)--(10,7)--(11,7)--(12,6)--(13,3)--(14,2)--(15,1)--(16,1)--(17,9)--(18,9);
\node at (1,8){$\bullet$};
\node at (2,8){$\bullet$};
\node at (3,4){$\bullet$};
\node at (4,4){$\bullet$};
\node at (5,2){$\bullet$};
\node at (6,5){$\bullet$};
\node at (7,5){$\bullet$};
\node at (8,3){$\bullet$};
\node at (9,6){$\bullet$};
\node at (10,7){$\bullet$};
\node at (11,7){$\bullet$};
\node at (12,6){$\bullet$};
\node at (13,3){$\bullet$};
\node at (14,2){$\bullet$};
\node at (15,1){$\bullet$};
\node at (16,1){$\bullet$};
\node at (17,9){$\bullet$};
\node at (18,9){$\bullet$};
\end{tikzpicture} 
\caption{The graph of the Stirling permutation $884425536776321199 \in Q_9$.}\label{fig:newfig}
\end{figure}

\emph{Pinnacle sets} of permutations have received great interest since their introduction in \cite{PinnaclesTypeA} by Davis, Nelson, Petersen, and Tenner. For the classical symmetric group, several properties of these sets have been studied \cite{DHHIN,Domagalski, FNT,fang, Minnich,rusu,rusu-tenner}. More recently, the authors of this article characterized and enumerated pinnacle sets for signed permutations \cite{PinnaclesTypeBD}. Given the depth and beauty in these previous results, it is natural to consider pinnacles in Stirling permutations. However, as we quickly show in Lemma~\ref{lem:stirling perms have no traditional peaks}, Stirling permutations do not have pinnacles, in the traditional sense. To accommodate this, we expand the idea of a ``pinnacle'' in the context of Stirling permutations: given that, topographically, mesas are hill formations that have cliffs on all sides, we use ``mesas'' to describe the analogue of pinnacles in the setting of Stirling permutations. 
That is, if there exists an index $i\in \{2,\ldots,2n-2\}$, such that $w(i-1)<w(i)=w(i+1)>w(i+2)$, then we call the value $w(i)$ a \textit{mesa} of $w$. 
Let $[n]\coloneqq \{1,\dots,n\}$. 
Then, given a Stirling permutation $w$, we write 
$$\Mesa(w)\coloneqq\{k\in[n]: k\mbox{ is a mesa of $w$}\}.$$
For example, recalling the Stirling permutation illustrated in Figure~\ref{fig:newfig}, we have 
$$\Mesa(884425536776321199)=\{5,7\}.$$

Analogously, instead of using the term \emph{vale} from the classical and signed settings, we expand this notion and instead consider the \emph{local minima} of a Stirling permutation. More precisely, the local minima of a Stirling permutation $w \in Q_n$ are the values $w(i)$ that are smaller than their nearest non-equal neighbors (if any) in $w$. In particular, if $w(1)$ is less than its leftmost non-equal neighbor, then $w(1)$ is a local minimum of $w$. Note that these are exactly the local minima of the graph of the permutation, in the usual sense. 
For example, the local minima of the Stirling permutation illustrated in Figure~\ref{fig:newfig} are $\{1,2,3\}$. 

Our main interest is to determine the subsets of $[n]$ that can be the mesa sets of Stirling permutations. To this end, we give the following definition.

\begin{defin}\label{defn:admissible}
A set $M\subset [n]$ is an \emph{admissible mesa set} if there exists a Stirling permutation $w \in Q_n$ such that $\Mesa(w) = M$. Any such $w$ is a \emph{witness} for $M$. We write $\AMS_n$ to denote the set of admissible mesa sets for Stirling permutations of order $n$, and $\left|\AMS_n\right|$ for the number of admissible mesa sets among all Stirling permutations of order $n$.
\end{defin}

Note that if $M$ is an admissible mesa set in $Q_n$, then in fact $M$ is an admissible mesa set in $Q_{n'}$ for any $n' \ge \max\{M\}$. This follows from repeated application of Lemma~\ref{lem:ams of order n vs of order n+1}(a), proved below. Hence, we can refer to an admissible mesa set without specifying the order of a witness. 

\begin{ex} \label{ex}
 For all positive integers $n$, the Stirling permutation $1122\cdots nn \in Q_n$ is a witness for the admissible mesa set $\emptyset$. The Stirling permutation $1334664225518877 \in Q_8$ is a witness for the admissible mesa set $\{5,6,8\}$, and this Stirling permutation is depicted in Figure~\ref{fig:ex3}.

\begin{figure}[htpb]
\center
\begin{tikzpicture}[yscale=.5, xscale=.75]
\draw[thick] (0.5,8.5)--(0.5,0.5)--(16.5,0.5);

\node at (0,1)[scale=.75]{$1$};
\node at (0,2)[scale=.75]{$2$};
\node at (0,3)[scale=.75]{$3$};
\node at (0,4)[scale=.75]{$4$};
\node at (0,5)[scale=.75]{$5$};
\node at (0,6)[scale=.75]{$6$};
\node at (0,7)[scale=.75]{$7$};
\node at (0,8)[scale=.75]{$8$};
\draw[thick] (1,1)--(2,3)--(3,3)--(4,4)--(5,6)--(6,6)--(7,4)--(8,2)--(9,2)--(10,5)--(11,5)--(12,1)--(13,8)--(14,8)--(15,7)--(16,7);
\node at (1,1){$\bullet$};
\node at (2,3){$\bullet$};
\node at (3,3){$\bullet$};
\node at (4,4){$\bullet$};
\node at (5,6){$\bullet$};
\node at (6,6){$\bullet$};
\node at (7,4){$\bullet$};
\node at (8,2){$\bullet$};
\node at (9,2){$\bullet$};
\node at (10,5){$\bullet$};
\node at (11,5){$\bullet$};
\node at (12,1){$\bullet$};
\node at (13,8){$\bullet$};
\node at (14,8){$\bullet$};
\node at (15,7){$\bullet$};
\node at (16,7){$\bullet$};
\end{tikzpicture} 
\caption{The graph of the Stirling permutation $1 33 4 66 4 22 55 1 88 77 \in Q_8$.\label{fig:ex3}}
\end{figure}
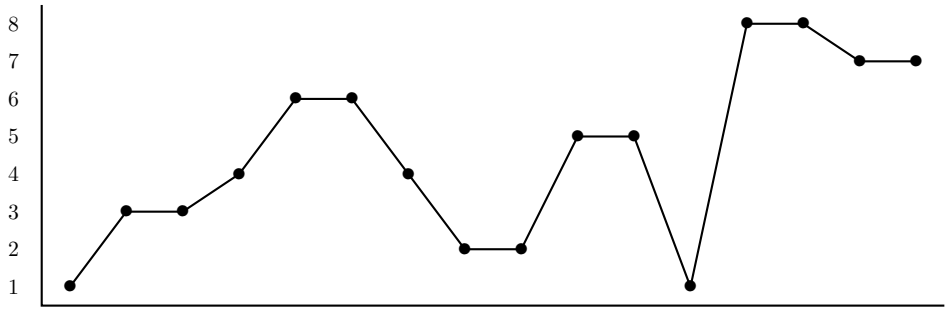
\end{ex}

Our first main result, presented in Theorem~\ref{thm:characterization}, characterizes admissible mesa sets for Stirling permutations. Specifically, we prove that a set $M$ of positive integers is an admissible mesa set if and only if
$$\Big| M \cap [1,x] \Big|  \le \frac{2x-1}{3}$$
for all $x \in M$. 

In order to count these sets, we then define a bijection between maximal mesa sets for $n=3k-1$ and certain rational Dyck paths, which, combined with certain tri-cyclic behaviour of mesa sets, yields the desired enumeration result. Namely, Corollary~\ref{cor:complete enumeration} states that for $n = 3k + r$, where $r \in \{0,1,2\}$ and $k$ is a non-negative integer, we have 
\[|\AMS_n| = 2^{n-1} - \sum_{i=0}^{k-1}2^{3i+r}C_{2(k-i)-1, k-i},\]
where
\[C_{m,\ell} = \frac{1}{\ell}\binom{\ell+m-1}{m}\]
corresponds to the \emph{rational Catalan number} \cite[\href{https://oeis.org/A328901}{A328901}]{OEIS}.

This article is organized as follows. In Section~\ref{sec:prelims}, we provide some notation and initial results. 
In Section~\ref{sec:characterization}, we prove Theorem~\ref{thm:characterization} and use these results in Section~\ref{sec:enumerations} to give a proof of our main enumerative result in Corollary~\ref{cor:complete enumeration}. 
We conclude with Section~\ref{sec:future}, describing directions for possible future research.

\section{Definitions and preliminary results}\label{sec:prelims}

Following the notation of Gessel and Stanley \cite[Theorem 2.1]{GesselStanley}, we let
$Q_n$
denote the set of Stirling permutations of order $n$.
As described in the introduction, we write Stirling permutations as words $w(1)\cdots w(2n) \in Q_n$, where $w(i)\in[n]$ for all $1\leq i\leq 2n$. 
For example, $Q_2=\{1122,2211,1221\}$ and 
\[Q_3=\left\{\begin{matrix}

     112233, &  122133, &  221133, &112332, &  122331,\\
221331, &113322, &  123321,  & 223311, &133122,   \\
233211, &331122, &  331221,  & 332211, &133221\phantom{,}
\end{matrix}\right\}     .\]
It is a direct consequence of this definition that there are no 
pinnacles  (values $w(i)$ satisfying $w(i-1)<w(i)>w(i+1)$) in Stirling permutations. 

\begin{lem}\label{lem:stirling perms have no traditional peaks}
For any Stirling permutation $w = w(1)\cdots w(2n) \in Q_n$, there is no $i\in[2,2n-1]$ for which $w(i-1) < w(i) > w(i+1)$. 
\end{lem}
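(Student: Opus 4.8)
The plan is to argue by contradiction, extracting from a hypothetical traditional peak a value that violates the defining property of Stirling permutations. Suppose there exists an index $i \in [2, 2n-1]$ with $w(i-1) < w(i) > w(i+1)$. Set $k = w(i)$. Since every value appears exactly twice in $w$, there is a second copy of $k$ at some index $j \neq i$; say without loss of generality that $j < i$ (the case $j > i$ is symmetric, or can be handled by the same argument after noting one of the two copies has a neighbor on each side that is not equal to $k$).

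\begin{proof}[Proof plan]
First I would pin down where the two copies of $k = w(i)$ sit. Because $w(i-1) < w(i)$, we have $w(i-1) \neq k$, so the other copy of $k$ is not at position $i-1$; similarly $w(i+1) \neq k$. Hence the two copies of $k$ occupy positions $i$ and some $j$ with either $j \le i-2$ or $j \ge i+2$. In either case, one of the values strictly between the two copies of $k$ is a neighbor of position $i$ on the side away from $j$: if $j < i$ then $w(i+1)$ lies strictly between the two copies (since $i < i+1$ and $i+1$ could equal $j$ only if $j \ge i+2$, contradiction, so actually we need $i+1 < $ the right copy — here I should instead take $j > i$ WLOG, so the right copy is at $j \ge i+2$, and then $w(i+1)$ is strictly between the two copies of $k$). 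By the Stirling property, every value strictly between the two copies of $k$ exceeds $k$, so $w(i+1) > k = w(i)$, contradicting $w(i) > w(i+1)$.

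The main (very minor) obstacle is bookkeeping the left/right symmetry cleanly: I want to choose the labeling so that the copy of $k$ I call "position $i$" has a strictly smaller neighbor on one side and I can locate the \emph{other} copy strictly on the far side of at least one of these neighbors. Since $w(i-1) \neq k$ and $w(i+1) \neq k$, the second copy of $k$ is at distance $\ge 2$ from $i$ on one side; pick that side, and then the neighbor of $i$ on that side is genuinely between the two copies of $k$ and hence $> k$ by the Stirling condition — but that neighbor is either $w(i-1)$ or $w(i+1)$, both of which are $< w(i) = k$ by hypothesis, a contradiction. This completes the argument, and it shows more strongly that no value can have a strictly smaller letter immediately on both sides of one of its occurrences, so Stirling permutations admit no traditional pinnacles.
\end{proof}

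I expect the write-up to be short; the only thing to be careful about is making the "one of the two copies is flanked appropriately" step airtight, since a priori the two copies of $k$ could be adjacent, i.e. $j = i \pm 1$ — but that is exactly excluded by $w(i-1) < w(i) > w(i+1)$ forcing both neighbors of $i$ to differ from $k$, so the copies are non-adjacent and the interval between them is nonempty and contains one of $w(i-1), w(i+1)$.
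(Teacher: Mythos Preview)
Your proposal is correct and follows essentially the same argument as the paper: from $w(i-1)<w(i)>w(i+1)$ you deduce that the two copies of $k=w(i)$ are non-adjacent, so one of $w(i-1)$ or $w(i+1)$ lies strictly between them and is smaller than $k$, contradicting the Stirling condition. The paper's proof is just a terser version of what you wrote; your extra bookkeeping about which side $j$ sits on is unnecessary but harmless.
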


\begin{proof}
Suppose, for the purpose of obtaining a contradiction, that there is such an $i$. Then the two appearances of $w(i)$ are not consecutive in $w$, and thus either $w(i-1)$ or $w(i+1)$ appears between them. Because $w(i)$ is larger than both of those values, this would mean that $w \not\in Q_n$.
\end{proof}

In particular, it follows immediately that each local maximum in a given Stirling permutation must appear twice consecutively in the word. 

Our goal in this work is to study the set $\AMS_n = \{\Mesa(w): w \in Q_n\}$ of admissible mesa sets. As we do this, certain properties of admissible mesa sets become immediately apparent. For example, there is a natural bound on the number of mesas that a Stirling permutation can have.

\begin{lem}\label{L:most}
A Stirling permutation of order $n$ can have at most $(2n-1)/3$ mesas. Moreover, this bound is sharp: there exists $w \in Q_n$ with $|\Mesa(w)| = \lfloor (2n-1)/3 \rfloor$.
\end{lem}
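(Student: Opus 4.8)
The plan is to prove the upper bound by a counting/structural argument about what a mesa "costs" in terms of letters, and then to exhibit an explicit family of witnesses achieving the floor.

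For the upper bound, I would argue as follows. Suppose $k \in \Mesa(w)$, so there is an index $i$ with $w(i-1) < w(i) = w(i+1) > w(i+2)$. The key observation is that between (and including) the two copies of $k$, all letters are $\ge k$, and immediately outside on both sides the letters are strictly smaller than $k$; moreover by Lemma~\ref{lem:stirling perms have no traditional peaks} every local maximum appears as a consecutive double. I want to show the mesas, together with the "descent gaps" they force, consume enough of the $2n$ positions that at most $(2n-1)/3$ mesas can occur. Concretely: list the mesas in the order their (consecutive) double-blocks appear in $w$, say with values at positions forming blocks $B_1, B_2, \ldots, B_m$. Each block $B_j$ occupies $2$ positions. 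I would then argue that the position immediately following each block $B_j$ is a letter strictly smaller than the mesa value, and that these "drop" positions are distinct from all mesa-block positions and — with care at the boundary — account for at least one extra position per mesa, except possibly one mesa can share its drop with the left end of another. A cleaner route: a mesa $w(i)=w(i+1)$ forces $w(i+2) < w(i)$, and since $w(i-1) < w(i)$ as well, the doubled block is "isolated" from above; nesting of Stirling permutations means that if two mesa-blocks are not nested then the positions $\{i-1 \text{ or } i+2\}$ give at least one position per mesa not used by any block, while if they are nested the inner one's block plus its drop sit strictly inside the outer, again contributing disjoint positions. Summing, $3m \le 2n$ is not quite enough; to get $3m \le 2n-1$ I would note that the globally last mesa-block in the word cannot be at the very end (there must be a letter after it that is smaller), and the first letter of $w$ is a local minimum, so one position is always "wasted" beyond the naive count, yielding $3m \le 2n-1$, i.e. $m \le (2n-1)/3$.

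The main obstacle I expect is the bookkeeping for nested versus disjoint mesa-blocks: making precise the claim that each mesa block $B_j$ can be charged a distinct third position (its immediate successor, which is strictly below it) without double-charging when blocks are deeply nested. I would handle this by a charging scheme — charge mesa $k$ to the position immediately to the right of its right copy — and check injectivity: if two mesas $k < k'$ were charged the same position $p$, then $p$ lies immediately right of the right copy of each, forcing the right copies to coincide or be adjacent, which (using that all letters strictly between paired copies exceed the smaller value, and that mesas come in doubled blocks) leads to a contradiction. The $2n$ positions minus the $m$ charged positions must still contain the $2m$ positions of the blocks themselves plus at least the initial local-minimum letter, giving $3m + 1 \le 2n$.

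For sharpness, I would write down an explicit witness. For $n = 3k-1$, a natural candidate is a concatenation of $k$ "gadget" blocks each contributing exactly $2$ to the mesa count while using $6$ letters' worth of values, arranged so the blocks sit side by side at a low baseline; one checks directly that it lies in $Q_n$ and that $|\Mesa(w)| = \lfloor(2n-1)/3\rfloor$. For the residues $n \equiv 0, 1 \pmod 3$ one appends a short tail (like $\cdots \, t\,t$ or $\cdots (n\!-\!1)(n\!-\!1)nn$) that adds the right number of letters without creating or destroying mesas, so the floor is still attained. Example~\ref{ex} and Figure~\ref{fig:ex3} already suggest the shape of such gadgets (the blocks $1\,33\,4\,66\,4\cdots$ pattern), so I would build the general construction in that spirit and verify the mesa count by a short direct computation.
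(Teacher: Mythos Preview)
Your charging argument is correct and yields the bound, but it is a different (position-based) route than the paper's (value-based) one. The paper argues as follows: if $w$ has $k$ mesas, then there are at least $k+1$ local minima in $w$ (one in each ``valley'' between consecutive mesa blocks, plus one at each end). Local minima are smaller than their neighbouring mesas, so they must take values from $[n]\setminus M$; since each value appears at most twice, the $k+1$ local minima require at least $(k+1)/2$ distinct non-mesa values. Hence $n \ge k + (k+1)/2$, i.e.\ $k \le (2n-1)/3$. This is shorter and avoids any position bookkeeping.

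Your argument works too, but note that your stated ``main obstacle'' is a phantom: mesa blocks are always exactly two adjacent positions (the two copies of the mesa value sit consecutively by definition), so they are pairwise disjoint and cannot nest. Once you drop the nesting discussion, your charging scheme is clean and one-line verifiable: the charged position $i+2$ of a mesa block at $(i,i+1)$ satisfies $w(i+2) < w(i+1)$, which immediately rules out $i+2$ being either half of any mesa block; distinct blocks have distinct $i$, hence distinct charges; and position $1$ is never a block position (blocks start at $i\ge 2$) nor a charge (charges are $\ge 4$). That gives $3m+1 \le 2n$ directly. So the approach is fine --- just prune the nesting detour.

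For sharpness, your sketch is in the right spirit but vaguer than needed. The paper simply writes down a single explicit word,
\[
t_1\, m_1\, m_1\, t_2\, m_2\, m_2\, \cdots\, t_k\, m_k\, m_k\, t_{k+1}\, t_{k+2}\, t_{k+3},
\]
with $k = \lfloor (2n-1)/3 \rfloor$, $m_i = n-k+i$, and $t_i = \lceil i/2 \rceil$ (dropping any $t_i$ whose index overflows). This handles all three residues $n \bmod 3$ at once without separate ``gadget'' and ``tail'' cases; you might consider replacing your case split with this uniform description.
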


\begin{proof} 
Fix $w \in Q_n$, and suppose that $|\Mesa(w)| = k$. In the one-line notation of $w$, each mesa appears twice consecutively, and this pair must be surrounded by smaller values. Thus there are at least $k+1$ local minima in $w$. Because $w$ is a Stirling permutation, these local minima can each be repeated once. Therefore, we need
$$n \ge k + \frac{k+1}{2},$$
and the result follows. 

Setting $k \coloneqq\lfloor (2n-1)/3 \rfloor$, the Stirling permutation
$$t_1 \ m_1 \ m_1 \ t_2\ m_2 \ m_2 \ t_3 \ \cdots \ t_k \ m_k \ m_k \ t_{k+1} \ t_{k+2}\ t_{k+3},$$
where $m_i=n-k+i$ and $t_i = \lceil i/2 \rceil$ whenever $\lceil i/2 \rceil \leq n-k$, and with $t_i =\emptyset$ otherwise, has $k$~mesas.
\end{proof}

Lemma~\ref{L:most} demonstrates a sort of tri-cyclic behavior to mesa sets, illustrated in the following example.

\begin{ex}
The following Stirling permutations each have maximally many mesas:
\begin{align*}
    &144155266233 \in Q_6, \\
    &14415526627733 \in Q_7, \text{ and}\\
    &1441552662773883 \in Q_8.
\end{align*}
\end{ex}

While there can be multiple witness Stirling permutations for a given admissible mesa set, we 
often refer to a particular witness Stirling permutation that we call ``canonical.'' This will mimic the terminology used in
\cite{PinnaclesTypeA,Domagalski,PinnaclesTypeBD}.

\begin{defin} \label{defn:canonical witness}
For $M\in \AMS_n$, write $M \coloneqq\{m_1<\cdots<m_l\}$, and set 
$\overline{M}\coloneqq[n]\setminus M =\{u_1<\cdots<u_{n-l}\}$. 
The \emph{canonical witness permutation} for $M$ is 
\begin{equation}\label{eqn:canonical witness}
t_1 \ m_1 \ m_1 \ t_2 \ m_2 \ m_2 \ \cdots \ t_l \ m_l \ m_l \ t_{l+1}\ \cdots \ t_{2(n-l)} \in Q_n,
\end{equation}
where $t_j=u_{\lceil j/2\rceil}$.
\end{defin}

More expansively, if $l = 2p$, then the canonical witness for $M$ is
\[
u_1 \ m_1 \ m_1 \ u_1 \ m_2 \ m_2 \ u_2 \ \cdots \ u_{p}  \ m_{2p} \ m_{2p} \ u_{p+1} \ u_{p+1} \ \cdots \ u_{n-l} \ u_{n-l},
\]
and if $l=2p+1$, then the canonical witness for $M$ is
\[
u_1 \ m_1 \ m_1 \ u_1 \ m_2 \ m_2 \ u_2 \ \cdots \ u_{p+1}  \ m_{2p+1} \ m_{2p+1} \ u_{p+1} \ u_{p+2} \ u_{p+2} \ \cdots \ u_{n-l} \ u_{n-l}.
\]

\begin{ex}\label{ex:bigmesa}
The admissible mesa set
$M = \{5,6,8\} \in \AMS_8$ 
has canonical witness permutation 
$15516628823377 \in Q_8$. 
Notice that this is a different permutation than the witness given for this mesa set in Figure~\ref{fig:ex3}. 
\end{ex}

\section{Characterization}\label{sec:characterization}

Our study of admissible mesa sets for Stirling permutations is both characterizing and enumerating. We begin with the characterization, which is given in the same language as \cite[Theorem~3.4]{rusu-tenner}. Throughout this section, we let $M$ be an admissible mesa set  and we set the following notation for any positive integer $x$:
$$M_x \coloneqq [1,x] \cap M.$$

\begin{ex}
   As demonstrated in Figure~\ref{fig:newfig}, the set $M=\{5,7\}$ is an admissible mesa set. For this $M$, we have $M_6 = M_5 =\{5\}$. 
\end{ex}

We can now characterize admissible mesa sets. Note that when $x$ is the largest mesa, this generalizes Lemma~\ref{L:most}.

\begin{thm}\label{thm:characterization}
A set $M$ of positive integers is an admissible mesa set if and only if
\begin{equation}\label{eqn:M_x bound}
|M_x| \le \frac{2x-1}{3}
\end{equation}
for all $x \in M$. 
\end{thm}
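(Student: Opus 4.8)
The plan is to prove both directions separately. For necessity, the inequality $|M_x| \le \frac{2x-1}{3}$ for each $x \in M$ should follow by a counting argument essentially identical to the one in Lemma~\ref{L:most}, but localized to the prefix of a witness permutation up through the rightmost occurrence of $x$. Concretely, fix a witness $w$ for $M$ and, for $x \in M$, consider the shortest prefix of $w$ containing both copies of every mesa in $M_x = M \cap [1,x]$. Every mesa $m \in M_x$ is realized by an adjacent repeated pair $mm$ flanked by strictly smaller values, and all such values used as flanking ``valleys'' are at most $x$ (indeed they lie below some mesa $\le x$). I would argue that the relevant sub-word uses at least $|M_x|$ values for the mesas themselves plus at least $|M_x|+1$ local minima beneath them, each minimum appearing at most twice, and all of these values are among $\{1,\dots,x\}$ (or at least can be chosen so after reorganizing). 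Counting letters of value $\le x$ that must appear gives $x \ge |M_x| + \lceil (|M_x|+1)/2 \rceil$, i.e. $2x \ge 3|M_x| + 1$, which is exactly \eqref{eqn:M_x bound}. The subtlety here is making rigorous the claim that the valleys supporting the mesas in $M_x$ can be taken to have value $\le x$; one route is to show that a valley flanking an $mm$-pair with $m \le x$ must itself be $< m \le x$, which is immediate since a local minimum is smaller than its neighbors.

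For sufficiency, given $M = \{m_1 < \cdots < m_l\}$ satisfying \eqref{eqn:M_x bound} for all $x \in M$, I would construct a witness directly, and the natural candidate is (a variant of) the canonical witness permutation of Definition~\ref{defn:canonical witness}. The construction interleaves the mesas $m_1, \dots, m_l$ as adjacent repeated pairs with valley letters $t_j = u_{\lceil j/2 \rceil}$ drawn from $\overline{M} = [n] \setminus M$. Two things must be checked: first, that this word is a genuine Stirling permutation (the Stirling condition: between the two copies of any value, all entries are larger) — this is a local check, since each $m_i$ appears as $m_i m_i$ consecutively, and each valley $u$ appears either as $u\,(\text{stuff})\,u$ where the stuff is a block $m_i m_i$ with $m_i > u$, or as $u u$ consecutively; and second, and more importantly, that $\Mesa$ of this word is exactly $M$ and not larger. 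The $m_i$ are mesas by construction (each $m_i m_i$ is flanked by smaller valley letters). The danger is that some valley letter $u_j$, or some $m_i$, accidentally becomes a mesa because of how consecutive equal valley letters $u_j u_j$ sit next to each other; I would need the inequalities between consecutive $t_j$'s to rule this out.

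The main obstacle is precisely this last point: verifying that the canonical construction has no \emph{extra} mesas, which is where the hypothesis \eqref{eqn:M_x bound} must genuinely be used. The inequality controls how quickly mesas accumulate relative to available small values, and hence guarantees that there are enough valley letters $u_j \in \overline{M}$ of sufficiently small value to separate consecutive mesa pairs without creating new peaks — equivalently, that when we assign $t_j = u_{\lceil j/2 \rceil}$ the valley sequence is ``low enough, long enough.'' I expect the heart of the argument to be an inductive or greedy check: walking left to right through \eqref{eqn:canonical witness}, at the step introducing $m_i m_i$ we need a previously-unused small value $u_{\lceil i \cdot c \rceil}$ available (with $c$ reflecting the ``two valleys share one $u$'' bookkeeping), and \eqref{eqn:M_x bound} applied at $x = m_i$ is exactly the statement that enough elements of $\overline{M} \cap [1, m_i]$ exist. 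I would isolate this as a counting lemma — something like $|\overline{M} \cap [1, m_i]| \ge \lceil i/2 \rceil$ or a similar bound — derive it from \eqref{eqn:M_x bound}, and then feed it into a straightforward verification that the constructed word lies in $Q_n$ and has mesa set exactly $M$.
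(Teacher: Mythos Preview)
Your proposal is correct and follows essentially the same approach as the paper: the counting argument for necessity (mesas in $M_x$ require at least $|M_x|+1$ local-minimum positions, each with a value in $[1,x]\setminus M_x$, and each such value is used at most twice) and the canonical witness of Definition~\ref{defn:canonical witness} for sufficiency. Your write-up is in fact more detailed than the paper's, which simply asserts that the inequalities~\eqref{eqn:M_x bound} force expression~\eqref{eqn:canonical witness} to have mesa set $M$; your isolated lemma (equivalently $m_i - i \ge \lceil (i+1)/2 \rceil$, which is exactly a rearrangement of \eqref{eqn:M_x bound} at $x=m_i$) is precisely what makes that assertion work. Two minor cleanups: the ``prefix'' framing in your necessity argument is unnecessary---the counting of values $\le x$ can be done in the whole word---and your worry about extra mesas in the tail dissolves once you note the tail $t_{l+1}\cdots t_{2(n-l)}$ is weakly increasing.
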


\begin{proof}
Suppose, first, that inequality~\eqref{eqn:M_x bound} holds for all $x \in M$. This family of assumptions mean that the (canonical witness) permutation described in expression~\eqref{eqn:canonical witness} has mesa set $M$.

On the other hand, suppose that $M$ is an admissible mesa set. The smallest $|M_x|$ mesas in $M$ must be surrounded by at least $|M_x| + 1$ local minima, and local minima are necessarily smaller than their neighboring mesas. 
The fact that we are working with Stirling permutations, in which each value may appear twice, means that there are $2(x - |M_x|)$ values available to serve as these local minima. Thus, $|M_x|+1\leq 2(x - |M_x|)$. Thus
$|M_x| \le (2x-1)/3$ for all $x \in M$.
\end{proof}

Theorem~\ref{thm:characterization} yields an interesting collection of properties, beginning with a characterization of the ``minimal'' admissible mesa set.

\begin{cor}\label{cor:minimal mesa set}
Let $M = \{m_1 < m_2 < \cdots < m_l\}$ be an admissible mesa set. Then $(m_1,m_2,\ldots, m_l)$ must be at least as large, component-wise, as
\begin{equation}\label{eqn:minimal mesas}
\left(2,4,5,7,8,10,11,\ldots,\left\lfloor\frac{3l}{2}\right\rfloor+1\right).
\end{equation}
\end{cor}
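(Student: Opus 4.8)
The plan is to derive this directly from the characterization in Theorem~\ref{thm:characterization}. Since $M = \{m_1 < \cdots < m_l\}$ is admissible, for each index $j \in [l]$ we may apply inequality~\eqref{eqn:M_x bound} with $x = m_j$. Now $M_{m_j} = [1, m_j] \cap M = \{m_1, \ldots, m_j\}$ has exactly $j$ elements, so the theorem gives $j \le (2m_j - 1)/3$, i.e. $m_j \ge (3j+1)/2$. Since $m_j$ is an integer, this forces $m_j \ge \lceil (3j+1)/2 \rceil$.

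The next step is to check that $\lceil (3j+1)/2 \rceil$ is exactly the $j$-th component of the tuple in~\eqref{eqn:minimal mesas}, namely that it equals $2, 4, 5, 7, 8, 10, 11, \ldots$ as $j = 1, 2, 3, \ldots$, and in particular that the final ($j = l$) component is $\lfloor 3l/2 \rfloor + 1$. This is a routine parity computation: when $j = 2i$ is even, $(3j+1)/2 = 3i + 1/2$, so $\lceil (3j+1)/2 \rceil = 3i+1 = \frac{3j}{2}+1 = \lfloor 3j/2\rfloor + 1$; when $j = 2i+1$ is odd, $(3j+1)/2 = 3i+2$ is already an integer, equal to $\frac{3j-1}{2}+1 = \lfloor 3j/2 \rfloor + 1$. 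So in both cases the $j$-th bound is $\lfloor 3j/2 \rfloor + 1$, which matches~\eqref{eqn:minimal mesas} term by term (for $j=1$: $2$; $j=2$: $4$; $j=3$: $5$; $j=4$: $7$; etc.), and the last entry is $\lfloor 3l/2 \rfloor + 1$ as claimed.

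Assembling these, the componentwise inequality $(m_1, \ldots, m_l) \ge (2, 4, 5, 7, \ldots, \lfloor 3l/2 \rfloor + 1)$ follows immediately. There is no real obstacle here — the only thing to be careful about is the ceiling/parity bookkeeping in identifying $\lceil(3j+1)/2\rceil$ with $\lfloor 3j/2\rfloor + 1$ and confirming it reproduces the stated sequence; everything else is a direct invocation of Theorem~\ref{thm:characterization} applied at each $x = m_j \in M$. One could optionally remark that this bound is tight, since the canonical witness for the set $\{\lfloor 3j/2\rfloor + 1 : j \in [l]\}$ (which satisfies~\eqref{eqn:M_x bound} by the same computation read in reverse) realizes equality in every component, but this is not required by the statement.
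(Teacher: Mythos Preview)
Your proof is correct. Both your argument and the paper's derive the bound from Theorem~\ref{thm:characterization} applied at each $x = m_j$, so the core idea is the same. The paper, however, packages this as a proof by contradiction: it assumes some $m_i \le \lfloor 3i/2\rfloor$, takes $i$ minimal, argues that $i$ must be even (using that consecutive entries of the target sequence differ by $1$ at odd indices, so a first violation at an odd index would force $m_i \le m_{i-1}$), and then obtains $|M_{m_i}| = i > (2m_i-1)/3$. Your approach is more direct: you simply rearrange $j \le (2m_j-1)/3$ to $m_j \ge \lceil (3j+1)/2\rceil$ and check the elementary identity $\lceil (3j+1)/2\rceil = \lfloor 3j/2\rfloor + 1$. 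This avoids the minimality and parity-of-$i$ discussion entirely and is arguably cleaner; the paper's route, on the other hand, makes more explicit why the odd-indexed constraints are automatically implied by the even-indexed ones.
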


\begin{proof}
Suppose, for the purpose of obtaining a contradiction, that this is not the case, and let $i$ be minimal so that $m_i \le \lfloor 3i/2\rfloor$. Certainly $i > 1$, because $1$ cannot be a mesa. Thus the sequence given in \eqref{eqn:minimal mesas} and the minimality of $i$ mean that $i$ must be even. Then
$$\frac{2m_i - 1}{3} \le \frac{2 \left\lfloor \frac{3i}{2} \right\rfloor - 1}{3} = \frac{2 \cdot \frac{3i}{2} - 1}{3} = i - \frac{1}{3} < i.$$
The mesas were indexed to be in increasing order, so $|M_{m_i}| = i$. Thus we have contradicted Theorem~\ref{thm:characterization} and so there can be no such $i$.
\end{proof}

The tri-cyclic nature of mesa set properties gives special relevance to $|\AMS_{3k-1}|$, whose elements, by Lemma~\ref{L:most}, can achieve the highest proportion of mesas among their values. The particular merit of this consideration will become apparent in Theorem~\ref{thm:3k-recursion}.

\section{Enumerations}\label{sec:enumerations}

We now focus on the enumeration of admissible mesa sets, and we will show that the sequence $\{\left|\AMS_n\right|\}$ obeys several recurrence relations. We preface those results with two useful lemmas. 

\begin{lem}\label{lem:ams of order n vs of order n+1} 
For $n\geq 1$, the following are true.\ 
\begin{enumerate}\renewcommand{\labelenumi}{(\alph{enumi})}
    \item $\AMS_n \subset \AMS_{n+1}$. 
    \item 
    For any $M \in \left(\AMS_{n+1} \setminus \AMS_n\right)$, we have $\left(M \setminus \{n+1\}\right) \in \AMS_n$, and hence the map 
    \begin{equation}\label{eqn:map into AMS_n}
    \varphi_n : (\AMS_{n+1} \setminus \AMS_n) \to \AMS_n
    \end{equation}
    defined by $\varphi_n(M) \coloneqq M \setminus \{n+1\}$ is injective. 
\end{enumerate}
\end{lem}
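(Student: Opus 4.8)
The plan is to prove both parts directly from the characterization in Theorem~\ref{thm:characterization}, which reduces everything to the inequality $|M_x| \le (2x-1)/3$ for all $x \in M$.

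For part (a), suppose $M \in \AMS_n$. Since $M \subseteq [n] \subseteq [n+1]$, the set $M$ is a candidate mesa set for order $n+1$. The defining inequality $|M_x| \le (2x-1)/3$ is a condition quantified only over $x \in M$, and the value $M_x = [1,x] \cap M$ does not depend on the ambient $n$. Hence the same inequalities that certify $M \in \AMS_n$ certify $M \in \AMS_{n+1}$, giving $\AMS_n \subseteq \AMS_{n+1}$. (Alternatively, one can exhibit a witness explicitly: if $w \in Q_n$ witnesses $M$, then $w\,(n+1)(n+1) \in Q_{n+1}$ appends a local max at the end with no smaller value to its right, so $n+1$ is not a mesa and no other mesas are created or destroyed.) The inclusion is strict in general, but we only need $\subseteq$ here; I would not belabor strictness.

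For part (b), let $M \in \AMS_{n+1} \setminus \AMS_n$. First observe that $n+1 \in M$: if $n+1 \notin M$, then $M \subseteq [n]$ and the Theorem~\ref{thm:characterization} inequalities for $M$ (again quantified over $x \in M$, all of which are $\le n$) would already certify $M \in \AMS_n$, contradicting $M \notin \AMS_n$. Now set $M' = M \setminus \{n+1\}$, so $M' \subseteq [n]$. I must check that $M'$ satisfies the characterization inequality at every $x \in M'$. Fix $x \in M'$; then $x < n+1$, so $M'_x = [1,x] \cap M' = [1,x] \cap M = M_x$, and the inequality $|M_x| \le (2x-1)/3$ holds because $M \in \AMS_{n+1}$ and $x \in M$. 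Thus $|M'_x| \le (2x-1)/3$ for all $x \in M'$, and by Theorem~\ref{thm:characterization}, $M' \in \AMS_n$. Injectivity of $\varphi_n$ is then immediate: every $M$ in the domain contains $n+1$, so $M = \varphi_n(M) \cup \{n+1\}$ recovers $M$ from $\varphi_n(M)$; concretely, if $\varphi_n(M) = \varphi_n(N)$ then $M \setminus \{n+1\} = N \setminus \{n+1\}$ and, since both contain $n+1$, $M = N$.

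I do not anticipate a genuine obstacle here; the only point requiring care is the observation that membership in $\AMS_m$ depends, through Theorem~\ref{thm:characterization}, only on the inequalities indexed by elements of $M$ and not on the ambient order $m$ (as long as $m \ge \max M$). Once that is made explicit, both parts fall out, with the only real content being the deduction "$M \notin \AMS_n$ forces $n+1 \in M$" in part (b), which is the contrapositive of that observation.
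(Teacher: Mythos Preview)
Your proof is correct, but it proceeds differently from the paper.  The paper argues both parts by direct witness manipulation: for (a) it appends $(n{+}1)(n{+}1)$ to a (canonical) witness $w \in Q_n$, and for (b) it deletes the two copies of $n{+}1$ from the canonical witness of $M \in \AMS_{n+1}$.  You instead route everything through Theorem~\ref{thm:characterization}, observing that the inequality $|M_x| \le (2x-1)/3$ is quantified only over $x \in M$ and makes no reference to the ambient order $n$.

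What each approach buys: your argument is shorter and more uniform once the characterization is in hand, and it makes fully explicit the step ``$M \in \AMS_{n+1}\setminus\AMS_n$ forces $n{+}1 \in M$,'' which the paper simply asserts.  On the other hand, the paper's witness-construction proof is logically prior to (and independent of) Theorem~\ref{thm:characterization}; this matters because the paper invokes Lemma~\ref{lem:ams of order n vs of order n+1}(a) already in the introduction to justify speaking of admissible mesa sets without specifying an order, a convention that the \emph{statement} of Theorem~\ref{thm:characterization} relies on.  Your approach is not circular---the \emph{proof} of Theorem~\ref{thm:characterization} does not use Lemma~\ref{lem:ams of order n vs of order n+1}, and in fact establishes the per-$n$ version you need---but if you adopt it, you should flag (as you do in your final paragraph) that Theorem~\ref{thm:characterization} is being read as: for every $n \ge \max M$, $M \in \AMS_n$ if and only if the inequalities hold.
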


\begin{proof} \ 
  \begin{enumerate}\renewcommand{\labelenumi}{(\alph{enumi})}
    \item This is a special case of an earlier observation. In particular, let $w$ be the canonical witness Stirling permutation for some $M \in \AMS_n$. Append two copies of $n+1$ to the right end of $w$ to produce a Stirling permutation of order $n+1$ whose mesa set is $M$.

    \item First note that $n+1$ must belong to all elements of $\AMS_{n+1} \setminus \AMS_n$. Now consider $M \in \AMS_{n+1}$ with $n+1 \in M$, and  let $w$ be the canonical witness for $M$. Deleting the two copies of $n+1$ from $w$ produces a Stirling permutation of order $n$, whose mesa set is precisely $M \setminus \{n+1\}$. The injectivity of the described map follows immediately.\qedhere
  \end{enumerate}
\end{proof}

From this result, it follows that understanding the set difference $\AMS_{n+1} \setminus \AMS_n$ amounts to determining whether the map from Lemma~\ref{lem:ams of order n vs of order n+1}(b) is surjective, or in what manner it fails to be surjective. The map $\varphi_n$, defined in \eqref{eqn:map into AMS_n}, will be useful throughout this section.

\begin{lem}\label{lem:when we can insert a big value}
Suppose that $M \in \AMS_n$ for some $n$, where $M$ is not of maximal size; that is, $|M| < \lfloor (2n-1)/3\rfloor$. Then $M \cup \{n+1\} \in \AMS_{n+1}$.
\end{lem}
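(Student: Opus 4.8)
The plan is to use the characterization from Theorem~\ref{thm:characterization} directly. We are given $M \in \AMS_n$ with $|M| < \lfloor (2n-1)/3 \rfloor$, and we want to show $M' \coloneqq M \cup \{n+1\} \in \AMS_{n+1}$. By Theorem~\ref{thm:characterization}, it suffices to verify that $|M'_x| \le (2x-1)/3$ for all $x \in M'$.

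First I would dispense with the values $x \in M' \setminus \{n+1\} = M$. For such $x$, since $n+1 > x$, we have $M'_x = M_x$, so the required inequality $|M'_x| \le (2x-1)/3$ is exactly the inequality guaranteed by Theorem~\ref{thm:characterization} applied to $M \in \AMS_n$. So nothing new is needed here.

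The only genuine case is $x = n+1$, the new largest element. Here $M'_{n+1} = M \cup \{n+1\}$, so $|M'_{n+1}| = |M| + 1$. We need $|M| + 1 \le (2(n+1)-1)/3 = (2n+1)/3$, i.e.\ $|M| \le (2n-2)/3$. Now the hypothesis says $|M| < \lfloor (2n-1)/3 \rfloor$, so $|M| \le \lfloor (2n-1)/3 \rfloor - 1$ since $|M|$ is an integer. The remaining step is the elementary check that $\lfloor (2n-1)/3 \rfloor - 1 \le (2n-2)/3$ for all $n \ge 1$; this follows because $\lfloor (2n-1)/3 \rfloor \le (2n-1)/3$, hence $\lfloor (2n-1)/3 \rfloor - 1 \le (2n-1)/3 - 1 = (2n-4)/3 \le (2n-2)/3$. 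Combining, $|M'_{n+1}| = |M|+1 \le (2n-2)/3 < (2n+1)/3 = (2(n+1)-1)/3$, as required.

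I do not anticipate a real obstacle: the argument is a direct application of the characterization theorem, and the only subtlety is the bookkeeping with the floor function in the hypothesis versus the clean fraction in the conclusion, which the inequality $\lfloor (2n-1)/3\rfloor - 1 \le (2n-2)/3$ handles. An alternative, more constructive route would be to modify the canonical witness for $M$ by inserting the pair $(n+1)(n+1)$ flanked by a sufficiently small available local minimum (which exists precisely because $M$ is not of maximal size, so there is slack in the number of available small values), but invoking Theorem~\ref{thm:characterization} is cleaner and avoids an explicit permutation construction.
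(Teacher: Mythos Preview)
Your argument is correct and takes a genuinely different route from the paper. The paper's proof is constructive: it takes the canonical witness $w$ for $M$ and, using the non-maximality assumption to guarantee that $w$ ends in two copies of the same non-mesa value, explicitly inserts the pair $(n+1)(n+1)$ between those last two letters, producing a Stirling permutation of order $n+1$ with mesa set $M \cup \{n+1\}$. You instead invoke Theorem~\ref{thm:characterization} and verify the inequality $|M'_x|\le(2x-1)/3$ directly, which is shorter and avoids any explicit permutation manipulation. The constructive version has the merit of exhibiting a concrete witness and is in keeping with the paper's running use of canonical witnesses; your version is cleaner once the characterization is available, and (as you note at the end) the explicit-witness argument is really the ``alternative route'' you chose not to pursue. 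One small slip in your final line: you write $|M|+1 \le (2n-2)/3$, but what you actually established is $|M| \le (2n-2)/3$; the correct conclusion is $|M|+1 \le (2n+1)/3$ (or, via your sharper bound $|M| \le (2n-4)/3$, $|M|+1 \le (2n-1)/3$), and either way the required inequality follows.
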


\begin{proof}
Consider the canonical witness Stirling permutation $w$ for $M$. Since $|M|=l < \lfloor (2n-1)/3\rfloor$, we have
$$w = t_1 \ m_1 \ m_1 \ t_2 \ m_2 \ m_2 \ \cdots \ t_l \ m_l \ m_l \ t_{l+1}\ \cdots \ t_{2(n-l)} \ t_{2(n-l)},
$$
maintaining the notation from Definition~\ref{defn:canonical witness}. Therefore, 
$$t_1 \ m_1 \ m_1 \ t_2 \ m_2 \ m_2 \ \cdots \ t_l \ m_l \ m_l \ t_{l+1}\ \cdots \ t_{2(n-l)} \ (n+1) \ (n+1) \ t_{2(n-l)}$$
is a Stirling permutation of order $n+1$, and its mesa set is $M \cup \{n+1\}$.
\end{proof}

We now present the first two recurrence relations for the sequence $\{\left|\AMS_n\right|\}$. 

\begin{thm}\label{thm:relation}
If $k\geq 1$, then $\left|\AMS_{3k+1}\right|=2\left|\AMS_{3k}\right|$ and $\left|\AMS_{3k-1}\right|=2 \left|\AMS_{3k-2}\right|$. 
\end{thm}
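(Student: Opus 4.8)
The plan is to show both identities by exhibiting, for the relevant pairs $(n, n+1)$ with $n \in \{3k, 3k-2\}$, a bijection between $\AMS_{n+1}$ and two disjoint copies of $\AMS_n$. By Lemma~\ref{lem:ams of order n vs of order n+1}(a) we already have $\AMS_n \subset \AMS_{n+1}$, so it suffices to prove that the injection $\varphi_n \colon (\AMS_{n+1}\setminus\AMS_n) \to \AMS_n$ from \eqref{eqn:map into AMS_n} is in fact a \emph{bijection} onto $\AMS_n$; then $|\AMS_{n+1}| = |\AMS_n| + |\AMS_{n+1}\setminus\AMS_n| = 2|\AMS_n|$. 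So the whole theorem reduces to surjectivity of $\varphi_n$ when $n \equiv 0$ or $n \equiv 1 \pmod 3$.

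Surjectivity of $\varphi_n$ means: for every $M \in \AMS_n$, the set $M \cup \{n+1\}$ lies in $\AMS_{n+1}$ (and hence in $\AMS_{n+1}\setminus\AMS_n$, since it contains $n+1$). By Lemma~\ref{lem:when we can insert a big value}, this holds automatically whenever $M$ is \emph{not} of maximal size, i.e.\ whenever $|M| < \lfloor (2n-1)/3 \rfloor$. Hence the only thing left to check is that $M \cup \{n+1\} \in \AMS_{n+1}$ when $|M| = \lfloor (2n-1)/3\rfloor$ is maximal. For this I would appeal directly to the characterization in Theorem~\ref{thm:characterization}: we must verify $|(M\cup\{n+1\})_x| \le (2x-1)/3$ for all $x \in M\cup\{n+1\}$. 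For $x \in M$ this is inherited from $M \in \AMS_n$, so the only new inequality is at $x = n+1$, namely $|M| + 1 \le (2(n+1)-1)/3 = (2n+1)/3$.

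The key arithmetic step — and the main point where the residue of $n$ modulo $3$ matters — is precisely this last inequality. When $|M|$ is maximal, $|M| = \lfloor (2n-1)/3\rfloor$, so we need $\lfloor (2n-1)/3\rfloor + 1 \le (2n+1)/3$. Writing $n = 3k$ gives $\lfloor (6k-1)/3\rfloor = 2k-1$, and $2k-1+1 = 2k \le (6k+1)/3 = 2k + \tfrac13$: true. Writing $n = 3k-2$ gives $\lfloor (6k-5)/3\rfloor = 2k-2$, and $2k-2+1 = 2k-1 \le (6k-3)/3 = 2k-1$: true (with equality). So in both cases $M \cup \{n+1\}$ satisfies the hypotheses of Theorem~\ref{thm:characterization} and is admissible; combined with the non-maximal case, $\varphi_n$ is surjective, proving $|\AMS_{3k+1}| = 2|\AMS_{3k}|$ and $|\AMS_{3k-1}| = 2|\AMS_{3k-2}|$. (One should note the contrast: for $n = 3k-1$ the same computation gives $\lfloor (6k-3)/3\rfloor + 1 = 2k > (6k-1)/3 = 2k - \tfrac13$, which fails — this is exactly why there is no analogous doubling from $3k-1$ to $3k$, and why the residue $3k-1$ plays the distinguished role flagged after Corollary~\ref{cor:minimal mesa set}.)

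I do not expect any serious obstacle here: the argument is a short case analysis resting on Lemmas~\ref{lem:ams of order n vs of order n+1} and \ref{lem:when we can insert a big value} plus Theorem~\ref{thm:characterization}, and the only subtlety is correctly handling the maximal-size case via the floor-function arithmetic above. If one prefers to avoid Theorem~\ref{thm:characterization}, the maximal case can instead be handled by a direct construction: take the witness permutation from Lemma~\ref{L:most} realizing a maximal mesa set of the appropriate shape and splice in a consecutive pair $(n+1)(n+1)$ adjacent to a suitable local minimum — but invoking Theorem~\ref{thm:characterization} is cleaner.
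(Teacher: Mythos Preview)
Your proposal is correct and follows essentially the same route as the paper: both arguments establish that the injection $\varphi_n$ of Lemma~\ref{lem:ams of order n vs of order n+1}(b) is surjective when $n\in\{3k-2,\,3k\}$, whence $|\AMS_{n+1}|=2|\AMS_n|$. The only cosmetic difference is that you treat the maximal-size case separately via the inequality of Theorem~\ref{thm:characterization}, whereas the paper notes that every $M\in\AMS_n$ is non-maximal when regarded in $\AMS_{n+1}$ and invokes Lemma~\ref{lem:when we can insert a big value} once.
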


\begin{proof} 
Due to Lemma~\ref{lem:ams of order n vs of order n+1}, it remains to understand the elements of $\AMS_{3k+1} \setminus \AMS_{3k}$ and of $\AMS_{3k-1} \setminus \AMS_{3k-2}$; that is, the admissible mesa sets containing $3k+1$ and $3k-1$, respectively.

By Lemma~\ref{L:most}, each $M\in \AMS_{3k}$ satisfies $|M|\le 2k-1$. Since 
$$2k-1 < \left\lfloor \frac{2(3k+1)-1}{3}\right\rfloor,$$
each $M \in \AMS_{3k}$ appears in $\AMS_{3k+1}$ as an admissible mesa set of non-maximal size. Thus, by Lemma~\ref{lem:when we can insert a big value}, $M \cup \{3k+1\} \in \AMS_{3k+1}$. Therefore, the map $\varphi_{3k}:\AMS_{3k+1} \setminus \AMS_{3k} \to \AMS_{3k}$ underlying Lemma~\ref{lem:ams of order n vs of order n+1}(b) is also surjective. Hence, 
$\left|\AMS_{3k+1} \setminus \AMS_{3k}\right| = \left|\AMS_{3k}\right|$
and consequently $\left|\AMS_{3k+1}\right| = 2\left|\AMS_{3k}\right|$.

Similarly, Lemma~\ref{L:most} says that if $M\in \AMS_{3k-2}$, then $|M|\le 2k-2$, and 
$$2k-2 < \lfloor(2(3k-1)-1)/3\rfloor.$$
Hence an analogous argument shows that $\left|\AMS_{3k-1}\right|= 2\left|\AMS_{3k-2}\right|$. 
\end{proof}

These doubling results do not hold for the relationship between $|\AMS_{3k}|$ and $|\AMS_{3k-1}|$. Indeed, it is easy to see that $\varphi_{3k-1}$ is not surjective, meaning that $\left|\AMS_{3k}\right|<2\left|\AMS_{3k-1}\right|$.

\begin{ex}
By Lemma~\ref{L:most}, Stirling permutations of order $7$ have at most $3$ pinnacles, as do Stirling permutations of order $6$. Thus, for example, the maximal mesa set $M = \lbrace 3,4,5 \rbrace \in \AMS_{5}$ is not in the image of the map $\varphi_5$ because $\{3,4,5,6\} \not\in \AMS_6$. 
\end{ex}

More generally, Stirling permutations of order $3k$ have at most $2k - 1$ mesas, as do Stirling permutations of order $3k-1$. Therefore, the image of $\varphi_{3k-1}$ is missing all elements of $\AMS_{3k-1}$ that have cardinality $2k-1$.

\begin{lem}\label{lem:can't add to a full set}
    Consider $M\in \AMS_{3k-1}$. Then $M \cup \{3k\} \notin \AMS_{3k}$ if and only if $|M|=2k-1$.
\end{lem}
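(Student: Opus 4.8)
The plan is to prove both implications using Theorem~\ref{thm:characterization}, which reduces everything to the numerical condition $|M_x| \le (2x-1)/3$ for all $x$ in the mesa set. Write $M = \{m_1 < \cdots < m_l\}$ with $l = |M|$, and recall that $M \in \AMS_{3k-1}$ already forces $l \le 2k-1$ by Lemma~\ref{L:most}.

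For the ``if'' direction, suppose $|M| = 2k-1$. Then $M \cup \{3k\}$ has cardinality $2k$, and we would need $|(M\cup\{3k\})_{3k}| = 2k \le (2\cdot 3k - 1)/3 = 2k - 1/3$, which is false. Hence the condition of Theorem~\ref{thm:characterization} fails at $x = 3k$, so $M \cup \{3k\} \notin \AMS_{3k}$.

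For the ``only if'' direction, I argue the contrapositive: if $|M| = l \le 2k-2$, then $M \cup \{3k\} \in \AMS_{3k}$. The cleanest route is to invoke Lemma~\ref{lem:when we can insert a big value} directly: with $n = 3k-1$, that lemma says that if $M \in \AMS_{3k-1}$ has $|M| < \lfloor(2(3k-1)-1)/3\rfloor = \lfloor(6k-3)/3\rfloor = 2k-1$, then $M \cup \{3k\} \in \AMS_{3k}$. Since $l \le 2k-2 < 2k-1$, the hypothesis is met and we are done. (One could alternatively verify the Theorem~\ref{thm:characterization} inequality by hand for $M \cup \{3k\}$: for $x \in M$ the inequality is inherited from $M \in \AMS_{3k-1}$, and for $x = 3k$ we need $l+1 \le 2k - 1/3$, i.e. $l \le 2k - 4/3$, which holds since $l \le 2k-2$; but citing Lemma~\ref{lem:when we can insert a big value} is shorter.)

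I do not anticipate a genuine obstacle here — both directions are short arithmetic once Theorem~\ref{thm:characterization} and Lemma~\ref{lem:when we can insert a big value} are in hand. The only point requiring a small amount of care is making sure the floor computation $\lfloor(2(3k-1)-1)/3\rfloor = 2k-1$ is correct so that the cardinality threshold in Lemma~\ref{lem:when we can insert a big value} lines up exactly with the value $2k-1$ appearing in the statement; this is where the ``tri-cyclic'' coincidence that $\AMS_{3k}$ and $\AMS_{3k-1}$ share the same maximal mesa-set size is really being used.
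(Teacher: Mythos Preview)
Your proof is correct and follows essentially the same route as the paper. The only cosmetic differences are that for the ``if'' direction the paper cites Lemma~\ref{L:most} (which is the $x = \max M$ case of Theorem~\ref{thm:characterization}) rather than Theorem~\ref{thm:characterization} itself, and for the contrapositive the paper rebuilds the explicit witness instead of invoking Lemma~\ref{lem:when we can insert a big value}; your version is slightly more economical on that second point.
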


\begin{proof}
    Suppose that $M \in \AMS_{3k-1}$ with $M \cup \{3k\} \not\in \AMS_{3k}$. Write 
    $\overline{M}\coloneqq[3k-1]\setminus M =\{u_1<u_2<\cdots\}$ as in Definition~\ref{defn:canonical witness}. 
      
    If $|M|=l < 2k-1$ then by Lemma~\ref{L:most} it follows that $M$ is not maximal and thus has a canonical witness permutation of the form
\[
t_1 \ m_1 \ m_1 \ t_2 \ m_2 \ m_2 \ \cdots \ t_l \ m_l \ m_l \ t_{l+1} \ \cdots \ t_{2(3k-1-l)},
\]
where  $t_j=u_{\lceil j/2\rceil}$
and $l+1 < 2(3k-1-l)$. Inserting two copies of the value $3k$ between $t_{l+1}$ and $t_{l+2}$ yields the Stirling permutation
\[
t_1 \ m_1 \ m_1 \ t_2 \ m_2 \ m_2 \ \cdots \ t_l \ m_l \ m_l \ t_{l+1} \ (3k) \ (3k) \ t_{l+2} \ \cdots \ t_{2(3k-1-l)}
\]
of order $3k$, with mesa set $M \cup \{3k\}$, and hence $M \cup \{3k\} \in \AMS_{3k}$. This is a contradiction and hence, $|M| = 2k-1$. 

For the converse, suppose that $M \in \AMS_{3k-1}$ with $|M| = 2k-1$. In particular, since $M \in \AMS_{3k-1}$ we have $3k \notin M$. Thus $|M \cup \{ 3k \}| = 2k$ which by Lemma~\ref{L:most} implies that $M \cup \{ 3k \} \notin \AMS_{3k}$.
\end{proof}

We can now give the final recursive relation for $\{|\AMS_n|\}$, continuing the work of Theorem~\ref{thm:relation}.

\begin{thm}\label{thm:3k-recursion}
If $k \ge 1$, then $|\AMS_{3k}| = 2|\AMS_{3k-1}| - |\lbrace M \in \AMS_{3k-1} : |M| = 2k-1 \rbrace|$.
\end{thm}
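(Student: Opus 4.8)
The plan is to combine the two preceding lemmas in the obvious bookkeeping way. By Lemma~\ref{lem:ams of order n vs of order n+1}(a), we have the inclusion $\AMS_{3k-1} \subset \AMS_{3k}$, and by part (b) the map $\varphi_{3k-1} : \AMS_{3k} \setminus \AMS_{3k-1} \to \AMS_{3k-1}$ sending $M \mapsto M \setminus \{3k\}$ is injective. Consequently
\[
|\AMS_{3k}| = |\AMS_{3k-1}| + |\AMS_{3k} \setminus \AMS_{3k-1}| = |\AMS_{3k-1}| + \big|\,\mathrm{image}(\varphi_{3k-1})\,\big|,
\]
so the task reduces to identifying the image of $\varphi_{3k-1}$ inside $\AMS_{3k-1}$.

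Next I would pin down that image. Since every element of $\AMS_{3k} \setminus \AMS_{3k-1}$ contains $3k$ (as noted in the proof of Lemma~\ref{lem:ams of order n vs of order n+1}(b)), a set $N \in \AMS_{3k-1}$ lies in the image of $\varphi_{3k-1}$ if and only if $N \cup \{3k\} \in \AMS_{3k}$. Lemma~\ref{lem:can't add to a full set} says exactly that $N \cup \{3k\} \notin \AMS_{3k}$ precisely when $|N| = 2k-1$. Therefore the image of $\varphi_{3k-1}$ is $\{N \in \AMS_{3k-1} : |N| < 2k-1\}$, which by the maximal-size bound $|N| \le 2k-1$ of Lemma~\ref{L:most} equals $\AMS_{3k-1} \setminus \{M \in \AMS_{3k-1} : |M| = 2k-1\}$. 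Hence $|\mathrm{image}(\varphi_{3k-1})| = |\AMS_{3k-1}| - |\{M \in \AMS_{3k-1} : |M| = 2k-1\}|$.

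Substituting this into the displayed identity gives
\[
|\AMS_{3k}| = |\AMS_{3k-1}| + |\AMS_{3k-1}| - |\{M \in \AMS_{3k-1} : |M| = 2k-1\}| = 2|\AMS_{3k-1}| - |\{M \in \AMS_{3k-1} : |M| = 2k-1\}|,
\]
as claimed. There is no real obstacle here: all of the content is carried by Lemma~\ref{lem:can't add to a full set}, and the present argument is just the assembly step. The one point to state carefully is the reduction of the image to a cardinality condition, i.e.\ that membership of $N$ in $\mathrm{image}(\varphi_{3k-1})$ is equivalent to $N \cup \{3k\}$ being an admissible mesa set of order $3k$ — this uses that $3k$ is the unique new value available and that deleting it from a witness (as in the proof of Lemma~\ref{lem:ams of order n vs of order n+1}(b)) lands back in $\AMS_{3k-1}$.
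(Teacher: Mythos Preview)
Your proof is correct and follows essentially the same approach as the paper's: both combine the injectivity of $\varphi_{3k-1}$ from Lemma~\ref{lem:ams of order n vs of order n+1}(b) with the characterization of its image given by Lemma~\ref{lem:can't add to a full set}, and then do the obvious bookkeeping. The paper states it slightly more tersely as $|\AMS_{3k}\setminus\AMS_{3k-1}| + |\{M \in \AMS_{3k-1} : |M|=2k-1\}| = |\AMS_{3k-1}|$, but this is just a rearrangement of your computation.
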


\begin{proof}
Combining the characterization of Lemma~\ref{lem:can't add to a full set} with the injectivity of the map $\varphi_{3k-1}: \AMS_{3k} \setminus \AMS_{3k-1} \to \AMS_{3k-1}$ from \eqref{eqn:map into AMS_n}, we obtain
$$|\AMS_{3k} \setminus \AMS_{3k-1}| + |\lbrace M \in \AMS_{3k-1} : |M| = 2k-1 \rbrace| = | \AMS_{3k-1}|,$$
from which the theorem statement follows. 
\end{proof}

In summary, for $k\geq 1$, Theorems~\ref{thm:relation} and~\ref{thm:3k-recursion} yield:
\begin{align*}
    |\AMS_{3k-1}| &= 2|\AMS_{3k-2}|,\\
    |\AMS_{3k}| &= 2|\AMS_{3k-1}| - |\lbrace M \in \AMS_{3k-1} : |M| = 2k-1 \rbrace|, \text{ and}\\
    |\AMS_{3k+1}| &= 2|\AMS_{3k}|.
\end{align*}    

We show in Theorem~\ref{thm:dyck paths correspond to 2k-1 mesas in order 3k-1} that $|\lbrace M \in \AMS_{3k-1} : |M| = 2k-1 \rbrace|$ is counted by \cite[\href{https://oeis.org/A006013}{A006013}]{OEIS}. From these results and the initial value $|\AMS_1| = 1$ (since $\AMS_1 = \{\emptyset\}$), we can compute $|\AMS_n|$ recursively for any $n$. 
Initial data is presented in Table~\ref{table:ams}, and in Corollary~\ref{cor:complete enumeration} we provide explicit formulas for this data.

\begin{table}[htbp]
{\renewcommand{\arraystretch}{2}$\begin{array}{c||c|c|c|c|c|c|c|c|c|c|c|c|c|c|c}
n & 1 & 2 & 3 & 4 & 5 & 6 & 7 & 8 & 9 & 10 & 11 & 12 & 13 & 14 & 15\\
\hline
\hline
\left\vert \AMS_n \right\vert & 1 & 2 & 3 & 6 & 12 & 22 & 44 & 88 & 169 & 338 & 676 & 1322 & 2644 & 5288 & 10433
\end{array}$}
\vspace{.1in}
\caption{The number of admissible mesa sets in Stirling permutations of order $n$, for $1 \le n \le 15$.}\label{table:ams}
\end{table}

As defined by Armstrong et.~al \cite{Armstrong}, 
an $(m,n)$-\emph{Dyck path} is lattice path $\pi$ consisting of north and east steps from $(0,0)$ to $(n,m)$ that remains below the line $y=\frac{m}{n}x$. 
Given coprime positive integers $m$ and $n$, it is well known that the number of $(m,n)$-Dyck paths is given by the \emph{rational Catalan number} \cite[\href{https://oeis.org/A328901}{A328901}]{OEIS}:
\[C_{m,n} = \frac{1}{n}\binom{n+m-1}{m}.\]
For the remainder of this section, we consider mesa sets $M \in \AMS_{3k-1}$ with $|M| = 2k-1$. These are witnessed by permutations for which the remaining $(3k-1)-(2k-1) = k$ non-mesa values are necessarily local minima. It turns out these maximal mesa sets are in bijection with the set of $(2k-1,k)$-Dyck paths as follows. 

\begin{defin}\label{defn:dyck path map}
    Define the map $\delta$ from $\{M \in \AMS_{3k-1} : |M| = 2k-1\}$ to lattice paths from  $(0,0)$ to $(k,2k-1)$ by
    $$\delta : M \mapsto \pi_1 \cdots \pi_{3k-1},$$
    where
    $$\pi_i \coloneqq \begin{cases}
        \text{north} & \text{ if } i \in M, \text{ and}\\
        \text{east} & \text{ otherwise}.
    \end{cases}$$
\end{defin}

Note that the choice of domain for $\delta$ means that the east steps in $\delta(M)$ correspond exactly to the local minima in a $w \in Q_{3k-1}$ for which $\Mesa(w) = M$.

\begin{ex} 
Consider $k=3$ and $M = \{3,5,6,7,8\}$. Note that $M \in \AMS_8$, and has maximal size, since $1331552662774884$ is a witness permutation for $M$ in $Q_8$. In particular, as seen in the second image in Figure~\ref{fig:dyckbijection}, the path $\delta(M)$ corresponds to the $(5,3)$-lattice path whose $i$th step is north if $i \in M$, and east otherwise. 
\end{ex}

\begin{figure}
    \centering
\begin{tikzpicture}[scale=.7]
% pick an m and n HERE
  \pgfmathtruncatemacro{\n}{3} %i.e here I set n=6
  \pgfmathtruncatemacro{\m}{2*\n-1} %i.e here I set m=5
%%%%%%% DONT TOUCH THIS PART %%%%%%%%%%%%%%%%
 \pgfmathtruncatemacro{\mi}{\m-1} %\mi=m-1 
    \pgfmathtruncatemacro{\ni}{\n-1} %\ni=n-1
    %
%generates the grid and anderson labels
\draw[dashed, thick, red] (0,0)--(\n,\m); %draws the diagonal from (0,m) -- (n,0)
  \foreach \x in {0,...,\n} % n=5
    \foreach \y  [count=\yi] in {0,...,\mi}   % m-1=6  
    %\draw (\x\y)--(\x\yi);
      \draw (\x,\y)--(\x,\yi);
        \foreach \y in {0,...,\m}     % m=7  
       \foreach \x  [count=\xi] in {0,...,\ni}   % n-1=4
        %\draw (\x\y)--(\xi\y);
         \draw (\x,\y)--(\xi,\y);
%%%%%%%%%%%%%%%%%%%%%%%%%%%%%%%%%%%%%%%%%%%%%%%%%
\draw [line width=2pt] (0,0)--(3,0)--(3,5);
\node at (1.5,-1){$\{4,5,6,7,8\}$};
\end{tikzpicture}
\qquad\qquad
\begin{tikzpicture}[scale=.7]
% pick an m and n HERE
  \pgfmathtruncatemacro{\n}{3} %i.e here I set n=6
  \pgfmathtruncatemacro{\m}{2*\n-1} %i.e here I set m=5
%%%%%%% DONT TOUCH THIS PART %%%%%%%%%%%%%%%%
 \pgfmathtruncatemacro{\mi}{\m-1} %\mi=m-1 
    \pgfmathtruncatemacro{\ni}{\n-1} %\ni=n-1
    %
%generates the grid and anderson labels
\draw[dashed, thick, red] (0,0)--(\n,\m); %draws the diagonal from (0,m) -- (n,0)
  \foreach \x in {0,...,\n} % n=5
    \foreach \y  [count=\yi] in {0,...,\mi}   % m-1=6  
    %\draw (\x\y)--(\x\yi);
      \draw (\x,\y)--(\x,\yi);
        \foreach \y in {0,...,\m}     % m=7  
       \foreach \x  [count=\xi] in {0,...,\ni}   % n-1=4
        %\draw (\x\y)--(\xi\y);
         \draw (\x,\y)--(\xi,\y);
%%%%%%%%%%%%%%%%%%%%%%%%%%%%%%%%%%%%%%%%%%%%%%%%%
\draw [line width=2pt] (0,0)--(2,0)--(2,1)--(3,1)--(3,5);
\node at (1.5,-1){$\{3,5,6,7,8\}$};
\end{tikzpicture}
\qquad\qquad
\begin{tikzpicture}[scale=.7]
% pick an m and n HERE
  \pgfmathtruncatemacro{\n}{3} %i.e here I set n=6
  \pgfmathtruncatemacro{\m}{2*\n-1} %i.e here I set m=5
%%%%%%% DONT TOUCH THIS PART %%%%%%%%%%%%%%%%
 \pgfmathtruncatemacro{\mi}{\m-1} %\mi=m-1 
    \pgfmathtruncatemacro{\ni}{\n-1} %\ni=n-1
    %
%generates the grid and anderson labels
\draw[dashed, thick, red] (0,0)--(\n,\m); %draws the diagonal from (0,m) -- (n,0)
  \foreach \x in {0,...,\n} % n=5
    \foreach \y  [count=\yi] in {0,...,\mi}   % m-1=6  
    %\draw (\x\y)--(\x\yi);
      \draw (\x,\y)--(\x,\yi);
        \foreach \y in {0,...,\m}     % m=7  
       \foreach \x  [count=\xi] in {0,...,\ni}   % n-1=4
        %\draw (\x\y)--(\xi\y);
         \draw (\x,\y)--(\xi,\y);
%%%%%%%%%%%%%%%%%%%%%%%%%%%%%%%%%%%%%%%%%%%%%%%%%
\draw [line width=2pt] (0,0)--(2,0)--(2,2)--(3,2)--(3,5);
\node at (1.5,-1){$\{3,4,6,7,8\}$};
\end{tikzpicture}
\qquad\qquad
\begin{tikzpicture}[scale=.7]
% pick an m and n HERE
  \pgfmathtruncatemacro{\n}{3} %i.e here I set n=6
  \pgfmathtruncatemacro{\m}{2*\n-1} %i.e here I set m=5
%%%%%%% DONT TOUCH THIS PART %%%%%%%%%%%%%%%%
 \pgfmathtruncatemacro{\mi}{\m-1} %\mi=m-1 
    \pgfmathtruncatemacro{\ni}{\n-1} %\ni=n-1
    %
%generates the grid and anderson labels
\draw[dashed, thick, red] (0,0)--(\n,\m); %draws the diagonal from (0,m) -- (n,0)
  \foreach \x in {0,...,\n} % n=5
    \foreach \y  [count=\yi] in {0,...,\mi}   % m-1=6  
    %\draw (\x\y)--(\x\yi);
      \draw (\x,\y)--(\x,\yi);
        \foreach \y in {0,...,\m}     % m=7  
       \foreach \x  [count=\xi] in {0,...,\ni}   % n-1=4
        %\draw (\x\y)--(\xi\y);
         \draw (\x,\y)--(\xi,\y);
%%%%%%%%%%%%%%%%%%%%%%%%%%%%%%%%%%%%%%%%%%%%%%%%%
\draw [line width=2pt] (0,0)--(2,0)--(2,3)--(3,3)--(3,5);
\node at (1.5,-1){$\{3,4,5,7,8\}$};
\end{tikzpicture}\\ 

\smallskip

\bigskip
\begin{tikzpicture}[scale=.7]
% pick an m and n HERE
  \pgfmathtruncatemacro{\n}{3} %i.e here I set n=6
  \pgfmathtruncatemacro{\m}{2*\n-1} %i.e here I set m=5
%%%%%%% DONT TOUCH THIS PART %%%%%%%%%%%%%%%%
 \pgfmathtruncatemacro{\mi}{\m-1} %\mi=m-1 
    \pgfmathtruncatemacro{\ni}{\n-1} %\ni=n-1
    %
%generates the grid and anderson labels
\draw[dashed, thick, red] (0,0)--(\n,\m); %draws the diagonal from (0,m) -- (n,0)
  \foreach \x in {0,...,\n} % n=5
    \foreach \y  [count=\yi] in {0,...,\mi}   % m-1=6  
    %\draw (\x\y)--(\x\yi);
      \draw (\x,\y)--(\x,\yi);
        \foreach \y in {0,...,\m}     % m=7  
       \foreach \x  [count=\xi] in {0,...,\ni}   % n-1=4
        %\draw (\x\y)--(\xi\y);
         \draw (\x,\y)--(\xi,\y);
%%%%%%%%%%%%%%%%%%%%%%%%%%%%%%%%%%%%%%%%%%%%%%%%%
\draw [line width=2pt] (0,0)--(1,0)--(1,1)--(3,1)--(3,5);
\node at (1.5,-1){$\{2,5,6,7,8\}$};
\end{tikzpicture}
\qquad\qquad
\begin{tikzpicture}[scale=.7]
% pick an m and n HERE
  \pgfmathtruncatemacro{\n}{3} %i.e here I set n=6
  \pgfmathtruncatemacro{\m}{2*\n-1} %i.e here I set m=5
%%%%%%% DONT TOUCH THIS PART %%%%%%%%%%%%%%%%
 \pgfmathtruncatemacro{\mi}{\m-1} %\mi=m-1 
    \pgfmathtruncatemacro{\ni}{\n-1} %\ni=n-1
    %
%generates the grid and anderson labels
\draw[dashed, thick, red] (0,0)--(\n,\m); %draws the diagonal from (0,m) -- (n,0)
  \foreach \x in {0,...,\n} % n=5
    \foreach \y  [count=\yi] in {0,...,\mi}   % m-1=6  
    %\draw (\x\y)--(\x\yi);
      \draw (\x,\y)--(\x,\yi);
        \foreach \y in {0,...,\m}     % m=7  
       \foreach \x  [count=\xi] in {0,...,\ni}   % n-1=4
        %\draw (\x\y)--(\xi\y);
         \draw (\x,\y)--(\xi,\y);
%%%%%%%%%%%%%%%%%%%%%%%%%%%%%%%%%%%%%%%%%%%%%%%%%
\draw [line width=2pt] (0,0)--(1,0)--(1,1)--(2,1)--(2,2)--(3,2)--(3,5);
\node at (1.5,-1){$\{2,4,6,7,8\}$};
\end{tikzpicture}
\qquad\qquad
\begin{tikzpicture}[scale=.7]
% pick an m and n HERE
  \pgfmathtruncatemacro{\n}{3} %i.e here I set n=6
  \pgfmathtruncatemacro{\m}{2*\n-1} %i.e here I set m=5
%%%%%%% DONT TOUCH THIS PART %%%%%%%%%%%%%%%%
 \pgfmathtruncatemacro{\mi}{\m-1} %\mi=m-1 
    \pgfmathtruncatemacro{\ni}{\n-1} %\ni=n-1
    %
%generates the grid and anderson labels
\draw[dashed, thick, red] (0,0)--(\n,\m); %draws the diagonal from (0,m) -- (n,0)
  \foreach \x in {0,...,\n} % n=5
    \foreach \y  [count=\yi] in {0,...,\mi}   % m-1=6  
    %\draw (\x\y)--(\x\yi);
      \draw (\x,\y)--(\x,\yi);
        \foreach \y in {0,...,\m}     % m=7  
       \foreach \x  [count=\xi] in {0,...,\ni}   % n-1=4
        %\draw (\x\y)--(\xi\y);
         \draw (\x,\y)--(\xi,\y);
%%%%%%%%%%%%%%%%%%%%%%%%%%%%%%%%%%%%%%%%%%%%%%%%%
\draw [line width=2pt] (0,0)--(1,0)--(1,1)--(2,1)--(2,3)--(3,3)--(3,5);
\node at (1.5,-1){$\{2,4,5,7,8\}$};
\end{tikzpicture}    
  \caption{All possible maximal mesa sets $M \in \AMS_{8}$ and their corresponding $(5,3)$-Dyck paths under the map $\delta$ represented above them.} \label{fig:dyckbijection}
\end{figure}
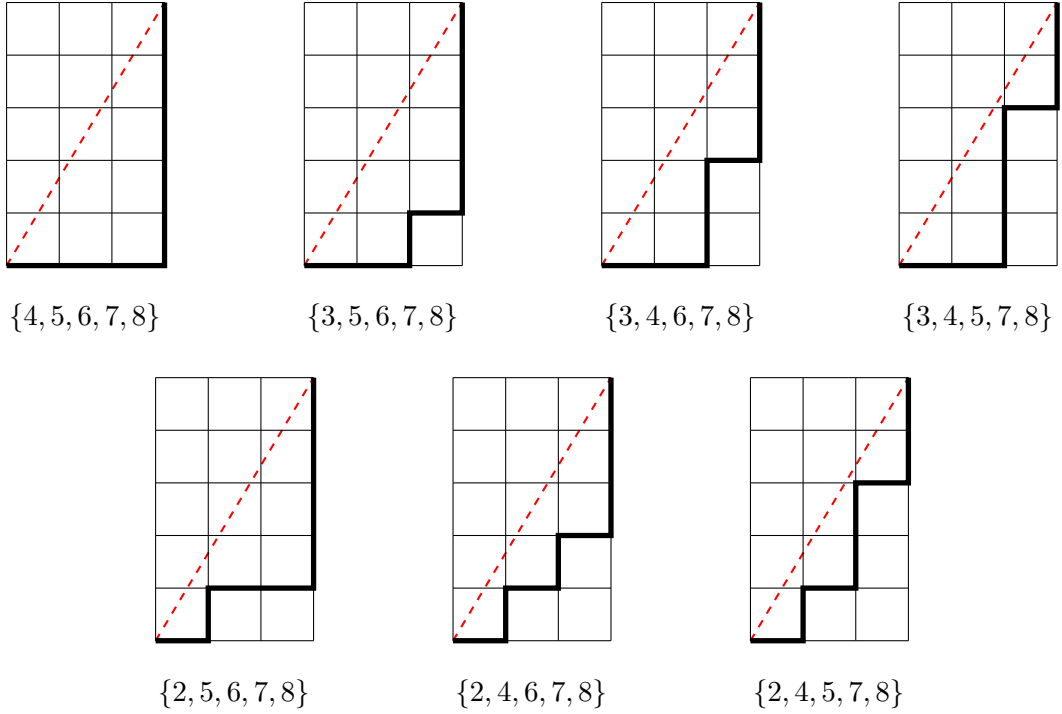

\begin{thm}\label{thm:dyck paths correspond to 2k-1 mesas in order 3k-1}
    The map $\delta$ is a bijection from $\{M \in \AMS_{3k-1} : |M| = 2k-1\}$ to the set of $(2k-1,k)$-Dyck paths.
\end{thm}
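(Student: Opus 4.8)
The plan is to reduce the statement to a comparison of two arithmetic conditions on the prefix counts $|M_x| = |[1,x]\cap M|$. First note that $\delta$ is manifestly injective: the $i$-th letter of $\delta(M)$ records whether $i\in M$. Moreover, if $M\subseteq[3k-1]$ with $|M|=2k-1$, then $\delta(M)$ consists of $2k-1$ north steps and $(3k-1)-(2k-1)=k$ east steps, so it ends at $(k,2k-1)$, and every lattice path from $(0,0)$ to $(k,2k-1)$ equals $\delta(M)$ for exactly one such $M$. Hence the theorem amounts to the claim: for $M\subseteq[3k-1]$ with $|M|=2k-1$, one has $M\in\AMS_{3k-1}$ if and only if $\delta(M)$ is a $(2k-1,k)$-Dyck path. (Since $\max M\le 3k-1$, the condition $M\in\AMS_{3k-1}$ coincides with "$M$ is an admissible mesa set'' by the remark following Definition~\ref{defn:admissible}, so Theorem~\ref{thm:characterization} applies.)

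Next I would translate both sides. After its first $x$ steps, $\delta(M)$ reaches the lattice point $(x-|M_x|,\,|M_x|)$. Because $\delta(M)$ is a monotone staircase and the line $y=\tfrac{2k-1}{k}x$ has positive slope, $\delta(M)$ lies weakly below this line exactly when every one of its vertices does, i.e. when $k\,|M_x|\le(2k-1)(x-|M_x|)$ for all $x$, which rearranges to
\[
(3k-1)\,|M_x| \ \le\ (2k-1)\,x \qquad\text{for all } x\in\{0,1,\dots,3k-1\}.
\]
(Here $\gcd(2k-1,k)=1$, so the line hits no interior lattice point and "weakly below'' is the reading compatible with the $C_{2k-1,k}$ count and with Figure~\ref{fig:dyckbijection}.) On the mesa side, Theorem~\ref{thm:characterization} gives that $M$ is admissible iff $|M_x|\le\tfrac{2x-1}{3}$ for all $x\in M$; and since $|M_x|=|M_{x'}|$ for $x'$ the largest element of $M$ with $x'\le x$ (and $|M_x|=0$ if there is no such $x'$), while $\tfrac{2x-1}{3}$ is increasing, this is equivalent to $3\,|M_x|\le 2x-1$ for all $x\in\{1,\dots,3k-1\}$.

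It then remains to compare these two conditions, which is a short computation. If $M$ is admissible, then for $1\le x\le 3k-1$ we have $(2x-1)(3k-1)-3(2k-1)x = x-(3k-1)\le 0$, so $\tfrac{2x-1}{3}\le\tfrac{(2k-1)x}{3k-1}$, hence $|M_x|\le\tfrac{(2k-1)x}{3k-1}$ and $\delta(M)$ is a Dyck path. For the converse, suppose $\delta(M)$ is a Dyck path but $M$ is not admissible, and pick $x\in M$ with $3|M_x|>2x-1$; since $|M_x|$ is an integer this forces $3|M_x|\ge 2x$, i.e. $|M_x|\ge\tfrac{2x}{3}$. But $2x(3k-1)-3(2k-1)x=x>0$, so $\tfrac{2x}{3}>\tfrac{(2k-1)x}{3k-1}$, whence $(3k-1)|M_x|>(2k-1)x$, contradicting the Dyck condition. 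This establishes the equivalence, and with it the bijectivity of $\delta$.

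The delicate point — and the one I would be most careful to get right — is the converse implication just above. The admissibility bound $\tfrac{2x-1}{3}$ is strictly smaller than the Dyck bound $\tfrac{(2k-1)x}{3k-1}$ for every $x<3k-1$, so the two notions look inequivalent at first glance; what makes them coincide is precisely the integrality of $|M_x|$, so that the smallest value of $|M_x|$ violating admissibility at $x$ is already $\ge 2x/3$, which overshoots the Dyck threshold. I would also be sure to state cleanly the reduction "a monotone lattice path lies weakly below a positively sloped line iff all its vertices do,'' and to use the domain restriction $|M|=2k-1$ (so that $\delta(M)$ genuinely ends at $(k,2k-1)$) both when setting up injectivity and when recovering $M$ from a given Dyck path.
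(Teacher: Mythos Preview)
Your proof is correct and follows essentially the same approach as the paper: both reduce the question to showing that the prefix-count condition $3|M_x|\le 2x-1$ from Theorem~\ref{thm:characterization} coincides with the Dyck constraint, with integrality bridging the gap. The only cosmetic difference is that the paper applies integrality at the lattice-point level first, rewriting ``$(a,b)$ below $\mathcal{L}$'' as $b\le 2a-1$, which upon substituting $(a,b)=(x-|M_x|,|M_x|)$ is \emph{literally} $3|M_x|\le 2x-1$; you instead substitute first and then prove the two resulting inequalities equivalent via a separate two-way argument.
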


\begin{proof}
    Fix $M \in \AMS_{3k-1}$ with $|M| = 2k-1$. Then $\delta(M)$ has $2k-1$ north steps and $k$ east steps, and thus $\delta(M)$ describes a path from $(0,0)$ to $(k,2k-1)$. Certainly, the map $\delta$ is injective, so it remains to show that its image is the set of $(2k-1,k)$-Dyck paths.

    Let $\mathcal{L}$ be the line $y = x \cdot (2k-1)/k$. Consider any lattice point $(a,b) \in [0,k] \times [0,2k-1]$ with $(a,b)\neq (0,0)$, $(a,b)\neq (k,2k-1)$. This point $(a,b)$ lies below $\mathcal{L}$ if and only if
    $$b < a \cdot \frac{2k-1}{k} = 2a - \frac{a}{k}.$$
    Because $a \in [0,k]$, we see that, in fact, $(a,b)$ lies below $\mathcal{L}$ if and only if
    $$b \le 2a - 1.$$ 
    Since the north steps represent mesas and the east steps represent local minima, this inequality recovers the inequality of Theorem~\ref{thm:characterization}. In other words, a path $\delta(M)$ passes through $(a,b)$ if and only if $(a,b)$ lies below the line $y = x \cdot (2k-1)/k$. Thus $\delta(M)$ is a $(2k-1,k)$-Dyck path. The converse, that any $(2k-1,k)$-Dyck path is the image under $\delta$ of some $M \in \AMS_{3k-1}$ with $|M| = 2k-1$ is immediate from the construction since any Dyck path is uniquely determined by the position of its north steps. Namely, if $\pi=\pi_1\cdots \pi_{3k-1}$ is a $(2k-1,k)$-Dyck path, then the corresponding admissible mesa set consists of the indices of the north steps of $\pi$.
\end{proof}

Since $k$ and $2k-1$ are relatively prime, we instantly obtain the following enumeration.

\begin{cor}\label{cor:counting maximal mesa sets in 3k-1}
For any $k \ge 1$,
$$|\{M \in \AMS_{3k-1} : |M| = 2k-1\}| = C_{2k-1,k} = \frac{1}{k}\binom{3k-2}{2k-1}.$$
\end{cor}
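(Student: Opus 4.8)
The plan is to obtain the count as an immediate consequence of Theorem~\ref{thm:dyck paths correspond to 2k-1 mesas in order 3k-1}, which already establishes a bijection $\delta$ from $\{M \in \AMS_{3k-1} : |M| = 2k-1\}$ onto the set of $(2k-1,k)$-Dyck paths. So the only remaining work is to count those Dyck paths, which reduces to invoking the known formula for rational Catalan numbers. First I would observe that $k$ and $2k-1$ are coprime: any common divisor of $k$ and $2k-1$ divides $2k - (2k-1) = 1$. This is the hypothesis needed to apply the rational Catalan count of Armstrong et.~al, and it is the only genuine ingredient beyond quoting the earlier theorem.

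Next I would recall that the number of $(m,n)$-Dyck paths, for coprime positive integers $m$ and $n$, equals $C_{m,n} = \frac{1}{n}\binom{n+m-1}{m}$, as stated in the excerpt. Specializing to $m = 2k-1$ and $n = k$ gives
\[
|\{M \in \AMS_{3k-1} : |M| = 2k-1\}| \;=\; C_{2k-1,k} \;=\; \frac{1}{k}\binom{k + (2k-1) - 1}{2k-1} \;=\; \frac{1}{k}\binom{3k-2}{2k-1},
\]
using $k + 2k - 1 - 1 = 3k-2$. Combining this with the bijection from Theorem~\ref{thm:dyck paths correspond to 2k-1 mesas in order 3k-1} yields the claimed equality.

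There is essentially no obstacle here; the corollary is a packaging step. If anything, the only point requiring a line of justification is the coprimality of $k$ and $2k-1$, and a brief check that the entry $3k-2$ in the binomial coefficient is computed correctly from the formula $C_{m,n} = \frac1n\binom{n+m-1}{m}$. One could also note that the formula is valid for $k=1$ (where the set has a single element, namely $M=\{1\}$ is excluded, so the maximal mesa set of size $2k-1 = 1$ must be $\{2\}$ in $\AMS_2$, matching $\frac{1}{1}\binom{1}{1} = 1$), confirming the base case, though this is not strictly necessary given the generality of the bijection already proved.

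\begin{proof}
Since $k$ and $2k-1$ differ by $k-1$ and satisfy $2\cdot k - 1\cdot(2k-1) = 1$, any common divisor of $k$ and $2k-1$ must divide $1$; hence $\gcd(k, 2k-1) = 1$. By Theorem~\ref{thm:dyck paths correspond to 2k-1 mesas in order 3k-1}, the map $\delta$ is a bijection from $\{M \in \AMS_{3k-1} : |M| = 2k-1\}$ to the set of $(2k-1,k)$-Dyck paths. Because $2k-1$ and $k$ are coprime, the number of $(2k-1,k)$-Dyck paths equals the rational Catalan number
\[
C_{2k-1,k} = \frac{1}{k}\binom{k + (2k-1) - 1}{2k-1} = \frac{1}{k}\binom{3k-2}{2k-1}.
\]
The result follows.
\end{proof}
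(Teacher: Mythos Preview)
Your proposal is correct and follows essentially the same approach as the paper: the paper simply notes that $k$ and $2k-1$ are relatively prime and invokes the rational Catalan count together with the bijection of Theorem~\ref{thm:dyck paths correspond to 2k-1 mesas in order 3k-1}. Your extra verifications (the explicit gcd argument and the $k=1$ sanity check) are fine but not needed.
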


\begin{ex}
In Figure~\ref{fig:dyckbijection} all maximal admissible mesa sets for $Q_8$ and their corresponding $(5,3)$-Dyck paths are listed. Indeed, as predicted by Corollary~\ref{cor:counting maximal mesa sets in 3k-1}, there are precisely $C_{5,3}=7$ admissible mesa sets.
\end{ex}

As suggested previously, another immediate consequence of Theorem~\ref{thm:dyck paths correspond to 2k-1 mesas in order 3k-1} is that  $|\{M\in \AMS_{3k-1} \ : \ |M|=2k-1\}|$ coincides with \cite[\href{https://oeis.org/A006013}{A006013}]{OEIS}, setting $n\coloneqq k-1$.

We conclude this section by giving an explicit formula for $|\AMS_n|$, for all $n$.

\begin{cor}\label{cor:complete enumeration}
Let $n = 3k + r$ where $r \in \{0,1,2\}$ and $k$ is a nonnegative integer. Then
$$|\AMS_n| = 2^{n-1} - \sum_{i=0}^{k-1}2^{3i+r}C_{2(k-i)-1, k-i}.$$
\end{cor}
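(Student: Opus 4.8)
The plan is to unwind the three recurrences summarized just before Table~\ref{table:ams} together with the Catalan count of Corollary~\ref{cor:counting maximal mesa sets in 3k-1}, and then verify the closed formula by induction on $k$ (with the three residues $r\in\{0,1,2\}$ handled in sequence within each step from $k$ to $k+1$). Writing $D_j \coloneqq |\{M \in \AMS_{3j-1} : |M| = 2j-1\}| = C_{2j-1,j}$ for brevity, the recurrences are $|\AMS_{3j-1}| = 2|\AMS_{3j-2}|$, $|\AMS_{3j}| = 2|\AMS_{3j-1}| - D_j$, and $|\AMS_{3j+1}| = 2|\AMS_{3j}|$, with the base case $|\AMS_1| = 1$ corresponding to $k=0$, $r=1$ (the empty sum giving $2^0 = 1$, as required).

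First I would set up the induction hypothesis precisely: assume the formula holds for $n = 3k+1$, i.e. $|\AMS_{3k+1}| = 2^{3k} - \sum_{i=0}^{k-1} 2^{3i+1} C_{2(k-i)-1,k-i}$. Then I would step forward through $n = 3(k+1)-1 = 3k+2$, then $n = 3(k+1) = 3k+3$, then $n = 3(k+1)+1 = 3k+4$, applying the appropriate recurrence at each stage. Going from $3k+1$ to $3k+2$ only doubles, so the formula for $r=2$, same $k$, falls out immediately: $|\AMS_{3k+2}| = 2^{3k+1} - \sum_{i=0}^{k-1} 2^{3i+2} C_{2(k-i)-1,k-i}$, matching $n = 3k+2$. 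Going from $3k+2$ to $3(k+1)-1 = 3k+2$... — more carefully, $3k+2 = 3(k+1)-1$, so this is already the "$3j-1$" case with $j = k+1$, and the doubling recurrence that produced it was $|\AMS_{3(k+1)-1}| = 2|\AMS_{3(k+1)-2}| = 2|\AMS_{3k+1}|$, consistent with the above.

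The one genuinely substantive step is passing from $n = 3k+2 = 3(k+1)-1$ to $n = 3(k+1) = 3k+3$, where the recurrence subtracts $D_{k+1} = C_{2(k+1)-1,k+1} = C_{2k+1,k+1}$. Here I must check that
\[
|\AMS_{3k+3}| \;=\; 2\,|\AMS_{3k+2}| - C_{2k+1,\,k+1}
\;=\; 2^{3k+2} - \sum_{i=0}^{k-1} 2^{3i+3} C_{2(k-i)-1,\,k-i} \;-\; C_{2k+1,\,k+1},
\]
and that this equals the claimed formula for $n = 3(k+1) + 0$, namely $2^{3(k+1)-1} - \sum_{i=0}^{k} 2^{3i} C_{2(k+1-i)-1,\,k+1-i}$. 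The bookkeeping is: $2^{3(k+1)-1} = 2^{3k+2}$ matches the leading term; the $i=k$ term of the new sum is $2^{3k} C_{1,1} = 2^{3k}$ — wait, that is not $C_{2k+1,k+1}$, so I would instead re-index. Reindex the target sum $\sum_{i=0}^{k} 2^{3i} C_{2(k+1-i)-1,k+1-i}$ by $i' = i+1$ for the terms $i = 0,\dots,k-1$ and peel off the $i=k$ term separately: the $i=k$ term is $2^{3k} C_{1,1} = 2^{3k}$, and... this does not immediately look like $C_{2k+1,k+1}$, so the correct peeling is to isolate the $i=0$ term $C_{2k+1,k+1}$ instead. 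Then $\sum_{i=0}^{k} 2^{3i} C_{2(k+1-i)-1,k+1-i} = C_{2k+1,k+1} + \sum_{i=1}^{k} 2^{3i} C_{2(k+1-i)-1,k+1-i}$, and substituting $j = i-1$ turns the remaining sum into $\sum_{j=0}^{k-1} 2^{3j+3} C_{2(k-j)-1,k-j}$, exactly the sum appearing above. This confirms the $r=0$ case. Finally, $n = 3k+4 = 3(k+1)+1$ is obtained by another doubling, $|\AMS_{3(k+1)+1}| = 2|\AMS_{3(k+1)}|$, which multiplies every term by $2$ and reproduces the $r=1$ formula with $k$ replaced by $k+1$, closing the induction.

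I expect the only real obstacle to be clerical: getting the index shifts and the exponents of $2$ to line up across the three residues without an off-by-one error, and correctly identifying that the newly-subtracted rational Catalan number $C_{2k+1,k+1} = C_{2(k+1)-1,k+1}$ is precisely the $i=0$ term of the enlarged sum. Once the reindexing in the displayed equation above is checked, the rest is immediate from Theorems~\ref{thm:relation} and~\ref{thm:3k-recursion} and Corollary~\ref{cor:counting maximal mesa sets in 3k-1}. Alternatively, one could avoid induction entirely: telescoping the recurrences directly expresses $|\AMS_n|$ as $2^{n-1}|\AMS_1|$ minus a sum of the subtracted terms $D_j$ each scaled by the appropriate power of $2$ accounting for all the doublings applied after step $3j$; collecting those powers gives exactly $2^{3i+r}$ with $i = k-j$, which is perhaps the cleanest route and the one I would actually write up.
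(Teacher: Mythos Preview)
Your proposal is correct and follows essentially the same route as the paper: both arguments unwind the three recurrences from Theorems~\ref{thm:relation} and~\ref{thm:3k-recursion} together with Corollary~\ref{cor:counting maximal mesa sets in 3k-1}, the paper by directly telescoping the $r=0$ case (and noting the other residues follow), and you by a formal induction stepping through the residues in turn (and then noting the telescoping alternative). The reindexing you worry about---isolating the $i=0$ term $C_{2k+1,k+1}$ of the enlarged sum and shifting the remaining terms---is exactly the bookkeeping the paper's telescoping hides, and you have it right.
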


\begin{proof}
 Following Theorems~\ref{thm:relation} and~\ref{thm:3k-recursion} and Corollary~\ref{cor:counting maximal mesa sets in 3k-1}, we have
\begin{align*}
    \left\vert \AMS_{3k}\right\vert &= 2\left\vert\AMS_{3k-1}\right\vert - C_{2k-1,k}\\
    &=2^3\left\vert\AMS_{3k-3}\right\vert - C_{2k-1,k}.
\end{align*}
Recursively applying this procedure and noting that $|\AMS_2| = 2$ yields
\begin{align*}
    \left\vert \AMS_{3k}\right\vert &= 2^{3k-3}\left\vert\AMS_3\right\vert - \sum_{i=0}^{k-2}2^{3i} C_{2(k-i)-1,k-i}\\
    &= 2^{3k-3} \left(2\left\vert\AMS_2\right\vert - C_{1,1}\right) - \sum_{i=0}^{k-2}2^{3i} C_{2(k-i)-1,k-i}\\
    &= 2^{3k-1} - \sum_{i=0}^{k-1} 2^{3i} C_{2(k-i)-1,k-i}.
\end{align*}
The enumerations of $\AMS_{3k\pm 1}$ follow analogously.
\end{proof}

The values $|\AMS_n|$ appear in Table~\ref{table:ams} for small values of $n$ and as \cite[\href{https://oeis.org/A363582}{A363582}]{OEIS}.

\section{Future work}\label{sec:future}

A direct consequence of our work is the enumeration of admissible mesa sets of maximal size for any $n$. Analogous to the bijection in Definition~\ref{defn:dyck path map}, by considering the maps $\varphi_{3k+1}: \AMS_{3k+2}\setminus \AMS_{3k+1} \to \AMS_{3k+1}$ and $\varphi_{3k}: \AMS_{3k+1}\setminus \AMS_{3k} \to \AMS_{3k}$ from Lemma~\ref{lem:ams of order n vs of order n+1} it is easy to see that similar bijections can be constructed from the set of maximal pinnacle sets having $n=3k+1$ (resp., $n=3k$) to lattice paths terminating at position $(k,2k)$ (resp., $(k,2k-1)$) which can be trivially extended to bijections into $(2k+1,k+1)$-Dyck paths, since the final two steps of such Dyck paths are always northward. 
For instance, under these bijections the mesa sets $\{3,5,6\} \in Q_6$ and $\{3,5,6,7\} \in Q_7$ would map to the same Dyck path as $\{ 3,5,6,7,8\} \in Q_8$, seen in Figure~\ref{fig:dyckbijection}. In particular, these maps provide an alternative way to derive the enumeration results of Corollary~\ref{cor:complete enumeration}. Consequently, another interesting direction would be to enumerate distinct admissible mesa sets based on their cardinalities.

There are many potential ways to leverage the association between a general admissible mesa set and a Dyck path. A natural question is whether known Dyck path statistics, such as \emph{area} or \emph{dinv}, relevant in the study of parking functions and $(q,t)$-Catalan combinatorics, can shed light on the study of mesa sets for Stirling permutations, and vice-versa. Indeed, it is easy to see that given any maximal mesa set $M \in \AMS_{3k-1}$, the area of $\delta(M)$ is precisely the number of \emph{inversions} between $M$ and $\overline{M} = [3k-1]\setminus M$; that is, the number of pairs $(m,u)$ with $m \in M$ and $u \in \overline{M}$  such that $m<u$. For example, in Figure~\ref{fig:dyckbijection} we can see that for $M=\{2,4,6,7,8\}$ and $\overline{M}=\{1,3,5\}$ we have area$(\delta(M))=3$ and, indeed, because $2<3$, $2<5$, and $4<5$, then $M$ has $3$ inversions with respect to $\overline{M}$. While a quick computation will show that dinv counts some subset of these inversions, it is unclear what the combinatorial meaning of this statistic is in the context of Stirling permutations.

Finally, recall that Mesa sets are the generalization of permutations' pinnacle sets to the setting of Stirling permutations. Thus, one might also consider exploring the analogue of peaks in the setting of Stirling permutations, where a peak no longer occurs at a single index, but rather at two adjacent indices. Similarly, just as one can attempt to enumerate permutations with given peak or pinnacle sets, the enumeration of Stirling permutations having a fixed mesa set remains an interesting open question.

\subsection*{Acknowledgements}

The authors thank Patrek K\'arason Ragnarsson for the coding and data that facilitated the research in this project.

\bibliographystyle{plain}
\bibliography{Bibliography.bib}

\begin{thebibliography}{10}

\bibitem{Armstrong}
Drew Armstrong, Nicholas~A. Loehr, and Gregory~S. Warrington.
\newblock Rational parking functions and {C}atalan numbers.
\newblock {\em Ann. Comb.}, 20(1):21--58, 2016.

\bibitem{Bona}
Miklós Bóna.
\newblock Real zeros and normal distribution for statistics on stirling
  permutations defined by gessel and stanley.
\newblock {\em SIAM J. on Disc. Math.}, 23, 2007.

\bibitem{PinnaclesTypeA}
Robert Davis, Sarah~A. Nelson, T.~Kyle~Petersen, and Bridget~E. Tenner.
\newblock The pinnacle set of a permutation.
\newblock {\em Disc. Math.}, 341(11):3249--3270, 2018.

\bibitem{DHHIN}
Alexander Diaz-Lopez, Pamela~E. Harris, Isabella Huang, Erik Insko, and Lars
  Nilsen.
\newblock A formula for enumerating permutations with a fixed pinnacle set.
\newblock {\em Disc. Math.}, 344:112375, 2021.

\bibitem{Domagalski}
Rachel Domagalski, Jinting Liang, Quinn Minnich, Bruce~E. Sagan, Jamie Schmidt,
  and Alexander Sietsema.
\newblock Pinnacle set properties.
\newblock {\em Disc. Math.}, 345(7), 2022.

\bibitem{Damir}
Askar Dzhumadil'daev and Damir Yeliussizov.
\newblock Stirling permutations on multisets.
\newblock {\em European J. of Comb.}, 36:377--392, 2014.

\bibitem{Elizalde}
Sergi Elizalde.
\newblock Descents on quasi-stirling permutations.
\newblock {\em J. Comb. Theory Ser. A}, 180:105429, 2021.

\bibitem{FNT}
Justine Falque, Jean-Christophe Novelli, and Jean-Yves Thibon.
\newblock Pinnacle sets revisited.
\newblock {\em Preprint arXiv:2106.05248}, 2021.

\bibitem{fang}
Wenjie Fang.
\newblock Efficient recurrence for the enumeration of permutations with fixed
  pinnacle set.
\newblock {\em Disc.~Math.~Theoret.~Comp.~Sci.}, 24(8), 2022.

\bibitem{GesselStanley}
Ira Gessel and Richard~P. Stanley.
\newblock Stirling polynomials.
\newblock {\em J. Comb. Theory Ser. A}, 24(1):24--33, 1978.

\bibitem{PinnaclesTypeBD}
Nicolle Gonz\'{a}lez, Pamela~E. Harris, Gordon Rojas~Kirby, Mariana Smit
  Vega~Garcia, and Bridget~Eileen Tenner.
\newblock Pinnacle sets of signed permutations.
\newblock {\em Disc. Math.}, 346(7), 2023.

\bibitem{Janson}
Svante Janson.
\newblock {Plane recursive trees, Stirling permutations and an urn model}.
\newblock {\em Disc.~Math.~Theoret.~Comp.~Sci.}, {DMTCS Proceedings vol. AI,
  Fifth Colloquium on Mathematics and Computer Science}, 2008.

\bibitem{JansonBona}
Svante Janson, Markus Kuba, and Alois Panholzer.
\newblock Generalized stirling permutations, families of increasing trees and
  urn models.
\newblock {\em J. Comb. Theory Ser. A}, 118(1):94--114, 2011.

\bibitem{KubaVarvak}
Markus Kuba and Anna Varvak.
\newblock On path diagrams and stirling permutations.
\newblock {\em Séminaire Lotharingien de Combinatoire}, 82, 2021.

\bibitem{Minnich}
Quinn Minnich.
\newblock Further results on pinnacle sets.
\newblock {\em Disc. Math.}, 346(4), 2023.

\bibitem{OEIS}
{OEIS Foundation Inc.}
\newblock The {O}n-{L}ine {E}ncyclopedia of {I}nteger {S}equences, 2023.
\newblock Published electronically at \url{http://oeis.org}.

\bibitem{rusu}
Irena Rusu.
\newblock Sorting permutations with fixed pinnacle set.
\newblock {\em Electron.~J.~Comb.}, 27, 2020.

\bibitem{rusu-tenner}
Irena Rusu and Bridget~Eileen Tenner.
\newblock Admissible pinnacle orderings.
\newblock {\em Graphs and Comb.}, 37:1205--1214, 2021.

\end{thebibliography}

\end{document}